\def\homeo{hom\'eomorphisme}
\def\clos{\mathrm{Clos}}
\def\inte{\mathrm{Int}}
\newcommand{\bbR}{{\mathbb{R}}}
\newcommand{\bbZ}{{\mathbb{Z}}}
\newcommand{\bbD}{{\mathbb{D}}}
\newcommand{\bbF}{{\mathbb{F}}}
\newcommand{\bbS}{{\mathbb{S}}}
\def\fix{\mathrm{Fix}}
\def\homeo{\mathrm{Homeo}}
\def\?{$^{***}$\marginpar{?}}
\newtheorem{theo}{Theorem}
\newtheorem*{ques*}{Question}
\newtheorem*{prop*}{Proposition}
\newtheorem*{conj*}{Conjecture}
\newtheorem*{theo*}{Theorem}
\newtheorem{coro}{Corollary}[section]
\newtheorem{affi*}{Affirmation}
\newtheorem{prop}[coro]{Proposition}
\newtheorem{lemm}[coro]{Lemma}
\newtheorem*{lemm*}{Lemma}
\def\?{\footnote{?}}
\newlength{\espaceavantspecialthm}
\newlength{\espaceapresspecialthm}
\newenvironment{defi}[1][]{\refstepcounter{coro} 
\vskip \espaceavantspecialthm \noindent \textbf{D\'efinition~\thecoro
#1.} }%
{\vskip \espaceapresspecialthm}
\title{Free planar actions of the Klein bottle group}
\author{Fr\'ed\'eric Le Roux\footnote{Laboratoire de math\'ematique (CNRS UMR 8628),  Universit\'e Paris Sud,
91405 Orsay Cedex, France.}
}
\begin{document}
\sloppy 

\maketitle
\begin{abstract}
We describe the structure of the free actions of the fundamental group of the Klein bottle $<a,b \mid aba^{-1}=b^{-1}>$ by orientation preserving homeomorphisms of the plane. The main result is that $a$ must act properly discontinuously, while $b$ cannot act properly discontinuously. As a corollary, we describe some torsion free groups that cannot act freely on the plane. We also find some properties which are reminiscent of Brouwer theory for the group $\bbZ$, in particular that every free action is virtually wandering.
\end{abstract}

AMS Classification: 57S25,  37E30.


\section{Introduction}
It is a natural problem to try to describe 
the finitely generated subgroups of $\mathrm{Homeo}_{0}(\bbR^2)$, the group of orientation preserving homeomorphisms of the plane.
One could also wish to impose some specific property, asking what are the groups that act freely, transitively, minimaly, and so on. To the knowledge of the author, there are very few results, even partial ones, on these very general questions. 

Remember that a group is said to act \emph{freely} if the only element having some fixed point is the trivial element.
A group cannot act freely on the plane if it has some torsion element, because any torsion element of $\mathrm{Homeo}_{0}(\bbR^2)$ is conjugate to a rotation, a theorem of Kerékjártó, see~\cite{ConKol94,Ker19}. What are the other obstructions?

This work is an attempt to give a very partial answer to this question.
 We will describe quite accurately the free planar actions of the group 
$$
BS(1,-1) = <a,b \mid aba^{-1} = b^{-1}>.
$$
 This group is the fundamental group of the Klein bottle, it is also a very special case of the family of Baumslag-Solitar groups (\cite{BauSol62}).
  On the one hand we will see that there are uncountably many different (non conjugated) free actions of $BS(1,-1)$. On the other hand these actions are quite rigid, and share many common features. We will find that there is an analogy between the free actions of $BS(1,-1)$ and the free actions of $\bbZ$ on the plane, as described by Brouwer's \emph{plane translations theorem}. In particular, 
\begin{itemize}
\item  the action of $BS(1,-1)$ is free as soon as the generators $a,b$ have no fixed point;
\item every free action is ``virtually  wandering'' : the action of the  index $2$ abelian subgroup $<a^2,b>$  is wandering.
\end{itemize}

  As a consequence of our study, we will prove that some torsion free groups may not act freely on the plane. 

\bigskip

It is time to describe the simplest free action of $BS(1,-1)$ on the plane\footnote{Of course, $BS(1,-1)$ admits a properly discontinuous action on the plane as the fundamental group of the Klein bottle; but in this text we shall be concerned only with orientation preserving homeomorphisms.}. The dynamics of the generators $a$ and $b$ are described on figure~\ref{fig.simple-action}.
\def\JPicScale{1}
\begin{figure}[htbp]
\begin{center}
\includegraphics{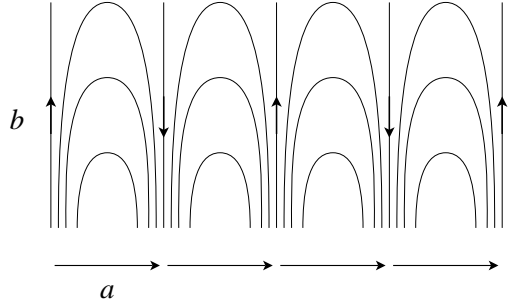}
\caption{generators of the simplest action of $BS(1,-1)$}
\label{fig.simple-action}
\end{center}
\end{figure}
Here is a more accurate description.
Consider the matrices $A,B \in SL(2,\bbR)$,
$$A = \left(
\begin{array}{rr}
 0 & -1 \\
  1  &  0 
\end{array}
\right)
\mbox{ and } B=\left(
\begin{array}{rr}
 2 & 0 \\
  0  &  \frac{1}{2} 
\end{array}
\right).$$
 The relation $ABA^{-1}=B^{-1}$ holds. This gives a (non faithful) action of $BS(1,-1)$ on the plane.
Consider the universal covering map $p: \bbR^2 \to \bbR^2 \setminus\{0\}$ given by $(\theta,r) \to e^{-r+i\theta}$. Then the action of $BS(1,-1)$ on the plane given by $A,B$ lifts under $p$ to a free action on the plane. More precisely, denote by $\widetilde{\mathrm{Homeo}}_{0}(\bbR^2)$ the subgroup of elements of $\mathrm{Homeo}_{0}(\bbR^2)$ that commutes with the map 
$(r,\theta) \to (r, \theta + 2\pi)$. Every element $H \in \widetilde{\mathrm{Homeo}}_{0}(\bbR^2)$ induces an element $P(H)$ of $\mathrm{Homeo}_{0}(\bbR^2)$ that fixes $0$.
The map $P$ is a morphism, and the preimage of the subgroup of $\mathrm{Homeo}_{0}(\bbR^2)$  generated by $A$ and $B$ is isomorphic to $BS(1,-1)$.
The generators are the maps $a,b$ where  $a: (\theta,r) \mapsto (\theta + \pi/2,r)$, and $b$  is the unique lift  of $B$ that satisfies $b(0,r) = (0,r-\log(2))$.

Note that the index $(2p+1)$-subgroup  $<a^{2p+1},b>$ is isomorphic to $BS(1,-1)$, thus we get an infinite family of examples.  The philosophy of our results is that every free action ``looks like'' these models. Theorem~\ref{theo}, Corollaries~\ref{coro.free} and~\ref{coro.wandering}, Lemma~\ref{lem.invariant} and section~\ref{ss.finer} below may be seen as more and more precise descriptions of the free planar actions of $BS(1,-1)$ which illustrate this philosophy.
We will see in particular that for a general free action of $BS(1,-1)$, the action of the generator $a$ is always conjugate to the translation $(r,\theta) \mapsto (r,\theta + \pi)$, while the action of $b$ is never conjugate to a translation (see Theorem~\ref{theo} and the remark that follows).

\bigskip

\paragraph{Aknowledgments}
A motivating problem, asked by Danny Calegari, is to determine whether each finitely generated group, acting on the disk, admits a left-invariant circular order, or equivalently if it is isomorphic to a subgroup of orientation preserving circle homeomorphisms. The answer is affirmative for diffeomorphisms; more generally, it was proved by Danny Calegari that every finitely generated subgroup of $\mathrm{Diffeo}^1_{0}(\bbR^2)$ admitting a compact invariant subset has this property. In the context of this problem, Andrés Navas introduced me with the crystallographic group $G_{1}$ that appears in Corollary~\ref{coro.crystallographic}; it was shortly after that I tried to prove that this group do not act freely on the plane. Andrés also suggested the group $G_{2}$ below.

\section{Results}
\subsection{Main result, easy consequences}

\begin{theo}
\label{theo}
 Assume $a$ and $b$ are fixed point free orientation preserving homeomorphisms of the plane, such that $a b a^{-1} = b^{-1}$.
Then  $a$ is conjugate in $\homeo^+(\bbR^2)$ to a translation. 
\end{theo}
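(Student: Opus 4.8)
The key observation is that the relation $aba^{-1}=b^{-1}$ forces $b$ to conjugate $a$ to $a$ via the symmetry $b \mapsto b^{-1}$; more useful is that $a^2$ commutes with $b$, since $a^2 b a^{-2} = a(b^{-1})a^{-1} = b$. So $a^2$ and $b$ generate an abelian subgroup acting freely on the plane. The natural strategy is to apply a Brouwer-theoretic criterion: an orientation preserving fixed point free homeomorphism of the plane is conjugate to a translation if and only if it has no ``wandering obstruction'', equivalently (by the work surrounding Brouwer's theorem and its refinements, e.g. results on free homeomorphisms) if and only if every point is wandering, or if and only if the homeomorphism is ``free'' in the sense that no orbit accumulates on itself in a bounded way. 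I would aim to show $a$ satisfies such a criterion.

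First I would set up the structure: consider the homeomorphism $a$, which by Brouwer's plane translation theorem is fixed point free hence has the ``translation arc'' structure; the obstruction to being globally conjugate to a translation is the existence of a non-wandering point, or equivalently a periodic-like recurrence. I would then use $b$ to transport dynamical information: since $bab^{-1} = a^{-1}$, the homeomorphism $b$ sends the $a$-dynamics to the $a^{-1}$-dynamics. Combined with the fact that $b$ itself is fixed point free, this should let me rule out any compact $a$-invariant recurrent set. Concretely, if $a$ had a non-wandering point, one extracts (via Brouwer theory, à la Franks or Guillou) a compact connected $a$-invariant set or a closed invariant curve / periodic disk chain; then applying $b$ and using $bab^{-1}=a^{-1}$ produces an invariant object for $a^{-1}$ as well, and the interplay with freeness of $b$ — e.g. $b$ cannot fix the relevant disk or curve, yet must permute a finite family of them — yields a fixed point of some power of $b$, contradicting freeness.

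So the plan is: (1) record that $a^2$ commutes with $b$ and that $b$ conjugates $a$ to $a^{-1}$; (2) invoke the characterization that $a$ is conjugate to a translation iff it is a ``free'' (non-recurrent) Brouwer homeomorphism, so it suffices to exclude recurrence; (3) suppose for contradiction that $a$ is recurrent, and use Brouwer theory to produce a canonical compact invariant continuum (or a Jordan curve, or a periodic chain of disks) associated to the recurrent behavior; (4) observe $b$ must send this canonical object to the corresponding object for $a^{-1}$, which by uniqueness/canonicity coincides with that of $a$, so $b$ (and its iterates) preserve a compact set; (5) derive from the action on a preserved disk or curve, together with orientation preservation, a fixed point of some nontrivial element of $\langle b \rangle$ or of $\langle a^2, b\rangle$, contradicting freeness.

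The main obstacle is step (3)–(4): making precise exactly which ``canonical invariant object'' a recurrent orientation preserving Brouwer homeomorphism carries, and proving it is canonical enough that $b$ must preserve it. Plain Brouwer theory only gives translation arcs; extracting a genuine invariant compact set from non-wandering behavior requires the finer machinery (Brouwer–Le Calvez foliation theory, or Franks' lemma on periodic disk chains, or the structure theory of non-wandering Brouwer homeomorphisms), and one must check that the construction is natural under conjugacy so that the conjugated object for $a^{-1}=bab^{-1}$ is the $b$-image of the object for $a$. I expect the bulk of the real work — and likely a separate lemma — to go into this canonicity-plus-freeness contradiction; once it is in place, the conclusion via Brouwer's theorem is immediate.
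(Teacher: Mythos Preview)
Your plan has the right overall architecture (argue by contradiction, find an invariant object from the failure of $a$ to be a translation, use $b$ to derive a fixed point), but several of the concrete ingredients are wrong and would make the argument collapse.

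First, the relation $aba^{-1}=b^{-1}$ does \emph{not} give $bab^{-1}=a^{-1}$; in $BS(1,-1)$ one has $bab^{-1}=ab^{-2}$, so $b$ does not conjugate the $a$-dynamics to the $a^{-1}$-dynamics. What is true, and what the paper uses, is that $b$ commutes with $a^2$ and hence acts by conjugacy on the invariants of $a^2$, which coincide with those of $a$. Second, your criterion in step~(2) is incorrect: by Brouwer's theorem every fixed point free orientation preserving plane homeomorphism already has all points wandering, so ``non-wandering points'' cannot be the obstruction to being a translation. The correct obstruction (section~\ref{subsec.singular}) is the existence of a \emph{singular couple} $(x,y)$, meaning $y\in\lim_a^+ x$; this is a much subtler form of recurrence at infinity. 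Third, a Brouwer homeomorphism has no compact invariant set whatsoever (all orbits go to infinity), so the ``canonical compact invariant continuum'' of step~(3) does not exist. The limit sets $\lim_a^\pm x$ are closed, unbounded, $a$-invariant, and (after replacing $b$ by $b^2$) $b$-invariant, but they are not compact.

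The paper's actual route is more elaborate: from a singular couple one takes the simply connected open set $V_a^-(y)$, passes to its prime-ends compactification to get a disk action of $\langle a,b\rangle$, analyzes the induced circle action via Lemma~\ref{lem.dimension-one}, and then uses Birkhoff's attractor lemma to produce small compact connected sets $k$, $k'$ near boundary fixed points of $b$ with $b(k)\subset k$, $b(k')\subset k'$. A second prime-ends compactification and Proposition~\ref{prop.generalise55} force large even iterates $a^{2n}(k)$ to meet $k'$; any point in the intersection then has its $b$-orbit trapped in two sets whose $\omega$-limit sets live in disjoint boundary arcs, a contradiction. The essential tools you are missing are the singular-set machinery for Brouwer homeomorphisms and the prime-ends compactification, which together substitute for the nonexistent compact invariant object.
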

This theorem, which is the main result of the paper, will be proved in section~\ref{sec.proof}.
 It is complemented by the following remark.
\emph{Under the same hypotheses, the map $b$ cannot be conjugate to a translation}. Indeed, let us assume $b = (x,y) \mapsto (x+1,y)$. The map $a$ sends every $b$-orbit to another $b$-orbit, thus it induces an orientation-preserving homeomorphism $\bar a$ of the infinite annulus $\bbR^2/b$. There are only two isotopy classes of orientation preserving homeomorphisms of the annulus, and the relation $aba^{-1}=b^{-1}$ tells us that $\bar a$ is isotopic to the map induced on $\bbR^2/b$ by the rotation $(x,y) \mapsto (-x,-y)$. In particular $\bar a$ extends to a homeomorphism $\bar{\bar{a}}$ of the $2$-sphere $\bbR^2/b \sqcup \{\pm\infty\}$ (the two-ends compactification of the infinite annulus) which exchanges the two points $\pm \infty$.
The homeomorphism $\bar{\bar{a}}$ of the $2$-sphere preserves the orientation, thus it must have a fixed point (this is a  consequence of Lefschetz's formula).
 Thus ${\bar{a}}$ has a fixed point. This means that $a$ preserves some $b$-orbit, let's say $\bbZ \times \{0\}$; the relation $a b a^{-1} = b^{-1}$ yields $a(z+1) = a(z)-1$ for every integer $z$. Either $a$ fixes a point of $\bbZ$, or every point of $\bbZ$ has period two. In this second case $a$ must have a fixed point: this is a consequence of Brouwer theory, see section~\ref{sub.Brouwer} below. In any case the action is not free.

As an easy corollary to the theorem and the remark, we may construct some examples of torsion-free  groups that cannot act freely on the plane.
\begin{coro}
\label{coro.crystallographic}
The following groups are torsion-free and admit no free action by orientation-preserving homeomorphisms of the plane:

\begin{itemize}
\item the crystallographic group $G_{1} = < \alpha,\beta \mid \beta \alpha^2 \beta^{-1}= \alpha^{-2}, \alpha \beta^2 \alpha^{-1}= \beta^{-2}>$;
\item the group $G_{2} =< \alpha,\beta, \gamma \mid \alpha \beta \alpha^{-1}= \beta^{-1},  \beta \gamma \beta^{-1}= \gamma^{-1}>$.
\end{itemize}
\end{coro}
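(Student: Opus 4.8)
The plan is to reduce everything to Theorem~\ref{theo} and the Remark that follows it; the only extra ingredients are the torsion-freeness of $G_1$ and $G_2$, which is soft group theory, and a small amount of bookkeeping with the defining relations. Recall that in a free action on $\bbR^2$ by orientation-preserving homeomorphisms, \emph{every} non-trivial element is a fixed point free orientation-preserving homeomorphism; hence Theorem~\ref{theo} and the Remark may be applied to any pair of non-trivial elements $g,h$ of such a group satisfying $g h g^{-1}=h^{-1}$, forcing $g$ to be conjugate to a translation and $h$ \emph{not} to be conjugate to a translation.

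For torsion-freeness I would argue as follows. In $G_2$ the two relations involve disjoint pairs of generators, so $G_2$ is the amalgamated free product of the two copies of $BS(1,-1)$ carried by $\{\alpha,\beta\}$ and by $\{\beta,\gamma\}$, amalgamated over the subgroup $\langle\beta\rangle$, which is infinite cyclic in each factor since $BS(1,-1)$ is torsion-free; as every torsion element of an amalgam is conjugate into a factor, $G_2$ is torsion-free. For $G_1$ one either quotes that it is a Bieberbach group — it is the Hantzsche--Wendt group, the fundamental group of a closed flat $3$-manifold — or checks torsion-freeness directly via its finite-index translation subgroup isomorphic to $\bbZ^3$. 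In any case, all the dynamical argument below actually requires is that $\alpha,\beta,\gamma,\alpha^2,\beta^2$ be non-trivial in the relevant group, which is immediate from the abelianizations $H_1(G_1)=\bbZ/4\bbZ\times\bbZ/4\bbZ$ and $H_1(G_2)=\bbZ\times\bbZ/2\bbZ\times\bbZ/2\bbZ$.

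Now suppose $G_2$ acts freely by orientation-preserving homeomorphisms of the plane. Applying Theorem~\ref{theo} to the pair $(a,b)=(\beta,\gamma)$ with the relation $\beta\gamma\beta^{-1}=\gamma^{-1}$ shows that $\beta$ is conjugate in $\homeo^+(\bbR^2)$ to a translation; applying the Remark to the pair $(a,b)=(\alpha,\beta)$ with the relation $\alpha\beta\alpha^{-1}=\beta^{-1}$ shows that $\beta$ is \emph{not} conjugate to a translation — a contradiction. For $G_1$, a free action would, by the Remark applied to $(\beta,\alpha^2)$ and the relation $\beta\alpha^2\beta^{-1}=\alpha^{-2}$, force $\alpha^2$ not to be conjugate to a translation; but Theorem~\ref{theo} applied to $(\alpha,\beta^2)$ and the relation $\alpha\beta^2\alpha^{-1}=\beta^{-2}$ makes $\alpha$ conjugate to a translation, hence makes its square $\alpha^2$ conjugate to a (non-trivial) translation — again a contradiction. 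Therefore neither $G_1$ nor $G_2$ admits a free planar action.

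The point worth underscoring is that there is essentially no genuine obstacle at this stage: all of the work is in Theorem~\ref{theo} itself, and the corollary amounts to selecting, in each presentation, two relations that pin the \emph{same} element (namely $\beta$ for $G_2$, and $\alpha^2$ for $G_1$) to be simultaneously ``$a$-like'' (forced conjugate to a translation) and ``$b$-like'' (forced not conjugate to a translation). The only mild care needed is to ensure the elements fed into the Theorem are non-trivial, so that freeness really does make them fixed point free.
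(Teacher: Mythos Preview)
Your proof is correct and the dynamical core is identical to the paper's: for each group you pick two relations that force the \emph{same} element to be simultaneously conjugate to a translation (via Theorem~\ref{theo}) and not conjugate to one (via the Remark), and you are careful to note that the elements involved are non-trivial so that freeness makes them fixed point free.

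Where you differ is in the group-theoretic side, and your route is arguably cleaner. For $G_2$ the paper proves torsion-freeness by establishing an explicit normal form $\omega\beta^n$ with $\omega$ in the free subgroup on $\alpha,\gamma$; your observation that $G_2$ is the amalgam of two copies of $BS(1,-1)$ over $\langle\beta\rangle$ gives the same conclusion in one line and is more conceptual. For $G_1$ the paper simply cites an outside reference for torsion-freeness, while you identify it as the Hantzsche--Wendt Bieberbach group; either way the point is soft. Your remark that the abelianizations alone already show the relevant elements are non-trivial is a nice safety net that the paper does not make explicit.
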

\begin{proof}
The group $G_{1}$ is torsion-free, and the elements $\alpha, \beta$ are non trivial (see~\cite{DerNav10}, paragraph 1.3.1).
If $G_{1}$ acts freely on the plane, then the hypotheses of Theorem~\ref{theo} are satisfied by the maps $a=\alpha$ and $b = \beta^2$.
Thus $\alpha$ is conjugate to a translation. But the maps $a= \beta$ and $b = \alpha^2$ also satisfy the hypotheses of the theorem, and by the above remark we get that $\alpha^2$ is not conjugate to a translation. Since the square of a translation is a translation, this is a contradiction. 

The arguments for the group $G_{2}$ are similar, the only non obvious part is that it is torsion-free and that the elements $\alpha, \beta, \gamma$ are non trivial.
This will follow from the existence of a normal form.
\begin{lemm}
The subgroup $F_{2}$ of $G_{2}$ generated by $\alpha$ and $\gamma$ is free.
Every element of $G_{2}$ has a unique expression of the form $w \beta^n$, with $\omega \in F_{2}$.
Furthermore, the product in $G_{2}$ is given by the formula
$$
(\omega\beta^n).(\omega' \beta^{n'}) = \omega \Phi^n(\omega') \beta^{(-1)^{\sigma(w)}n + n'}
$$
where $\Phi$ is the automorphism of $F_{2}$ that sends $\alpha$ on $\alpha$ and $\gamma$ on $\gamma^{-1}$, and $\sigma: F_{2} \to \bbZ$
is the morphism that counts the sum of the powers of $\alpha$. 
\end{lemm}
\begin{proof}
Consider a word $x$ in $\alpha,\beta,\gamma,\alpha^{-1},\beta^{-1},\gamma^{-1}$. The relations in the group $G_{2}$ can be interpreted by saying that the following operations are valid on the word $x$: the sequence $\beta^\varepsilon \alpha$ may be replaced by $\alpha \beta^{-\varepsilon}$, and the sequence $\beta \gamma^\varepsilon$ may be replaced by $\gamma^{-\varepsilon} \beta$. We may perform these operations on $x$ until every power of $\beta$ has been ``pushed'' to the right of the word. This proves the existence of a normal form in $G_{2}$ as the product of an element of the subgroup $F_{2}$ generated by $\alpha, \gamma$, and a power of $\beta$.

Given such a word $x$, a word $\omega(x)$ in $\alpha,\gamma,\alpha^{-1},\gamma^{-1}$ is constructed as follows. For each occurrence of $\alpha$, we count the number of occurrences of $\beta$ and $\beta^{-1}$ appearing on its left-hand side, and if this number is odd we replace $\alpha$ by its inverse. We do the same for each occurrence of $\alpha^{-1}$; then we delete all the occurrences of $\beta$ and $\beta^{-1}$. The valid operations in $G_{2}$ do not affect the value of $\omega(x)$, thus this construction defines a map $\omega : G_{2} \to \bbF_{2}$. This proves that the subgroup generated by $\alpha$ and $\gamma$ is isomorphic to $\bbF_{2}$, and the uniqueness of the word $\omega \in F_{2}$ in the normal form $\omega \beta ^n$. The uniqueness of the power of $\beta$ is proved similarly. The remaining details, including the formula for the product, is left to the reader.
\end{proof}
The uniqueness of the normal form immediatly entails that $\alpha,\beta,\gamma$ are non trivial elements in $G_{2}$.
It may also be used to check that $G_{2}$ is torsion free.
\end{proof}

The group $G_{1}$ acts faithfully by orientation preserving homeomorphisms on the circle (see section~\ref{sec.line} below), but not on the line (see~\cite{DerNav10}, paragraph 1.3.1). The group $G_{2}$ acts faithfully on the line. This may be seen as follows. It is a classical fact that it is enough to prove that there exists a left-invariant order on $G_{2}$ (see~\cite{DerNav10}, Proposition 1.1.5).
Note that the morphism $\sigma$ defined in the previous Lemma is well defined on $G_{2}$. The function $\eta: G_{2} \to \bbZ$, which is defined using the normal form by $\eta(\omega\beta^n) = n$, restrict to a morphism on the kernel of $\sigma$. The function $w$ defined on $G_{2}$ by $w(\omega\beta^n) = \omega$ restrict to an isomorphism from the kernel of $\eta$ onto the free group $\bbF_{2}$ with two generators. Let us choose an arbitrary left invariant order on $\bbF_{2}$ (it is well known that $\bbF_{2}$ acts faithfully on the line).
In order to define a left-invariant order on $G_{2}$, we first define the set ${\cal P}^+$ of ``positive'' elements, that is, we say that $g > e$ if and only if one of the following holds:
\begin{itemize}
\item $\sigma(g) >0$,
\item $\sigma(g) = 0$ and $\eta(g) >0$,
\item $\sigma(g) = 0$ and $\eta(g) =0$ and $w(g) >e$,
\end{itemize}
where the last inequality refers to the previously chosen order on $\bbF_{2}$.
Then it is easy to check that this set ${\cal P}^+$  is a semi-group, the set ${\cal P}^-$  of inverses of elements of ${\cal P}^+$  is again a semi-group, and that 
$\{ {\cal P}^+ , {\cal P}^-, \{e\} \}$ is a partition of $G_{2}$. Thus this partially defined order extends to a left invariant order on $G_{2}$  (see~\cite{DerNav10}, 1.1.1).

\bigskip

Thus both groups admit faithful (but not free!) actions by orientation preserving homeomorphisms on the plane.
Concerning general actions on the plane, we notice that \emph{every abelian finite subgroup of $\mathrm{Homeo}_{0}(\bbR^2)$ is cyclic}. This may be seen, for example, as an easy consequence of the fact that every compact subgroup of $\mathrm{Homeo}_{0}(\bbR^2)$ is conjugate to a subgroup of $SO(2)$, a theorem of Kerékjártó (\cite{Ker41,Kol06}). Thus, for instance, the Klein four-group do not act faithfully on the plane.
The author is not aware of any other obstruction for a group to act on the plane. In particular,
it would be interesting to find some finitely generated torsion free groups that do not act faithfully on the plane.

\subsection{Analogy with the theory of Brouwer homeomorphisms}
\label{sub.Brouwer}
In this section we show two consequences of Theorem~\ref{theo} that may be regarded as analogous to old results concerning free actions of the group $\bbZ$.

A fixed point free, orientation preserving homeomorphism $h$ of the plane is called a \emph{Brouwer homeomorphism}. The main result of Brouwer theory says that \emph{a Brouwer homeomorphism has no periodic points} (see for example~\cite{LeC06} and the references therein). 
 In other words, the $\bbZ$-action generated by $h$ is free. 
The following result may be seen as an analog of this fact.
\begin{coro}
\label{coro.free}
Let $a,b$ be as in Theorem~\ref{theo}: two Brouwer homeomorphisms satisfying the relation $aba^{-1} = b^{-1}$. Then the action of $BS(1,-1)$ generated by $a$ and $b$ is free.
\end{coro}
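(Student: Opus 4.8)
The plan is to deduce Corollary~\ref{coro.free} from Theorem~\ref{theo} together with the remark that follows it, reducing the freeness of the whole $BS(1,-1)$-action to a statement about a single cyclic subgroup. First I would use the theorem to conjugate the action so that $a$ becomes the translation $(r,\theta)\mapsto(r,\theta+1)$; this does not affect freeness, so we may work with this normal form throughout. A general element of $BS(1,-1)$ can be written in a normal form $b^{k}a^{n}$ (using the relation $aba^{-1}=b^{-1}$ repeatedly to push all powers of $a$ to the right), so I need to show that no element $b^{k}a^{n}$ with $(k,n)\neq(0,0)$ has a fixed point.

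The argument splits according to whether $n=0$. If $n\neq 0$, then $b^{k}a^{n}$ descends to a homeomorphism of the annulus $\bbR^2/a$ which is isotopic to a nontrivial power of the deck transformation, hence has no fixed point upstairs for elementary reasons: a fixed point of $b^k a^n$ would project to a fixed point of a map freely homotopic to $\theta \mapsto \theta + n$ with $n \ne 0$, which is impossible since such a map has nonzero translation number on each orbit. (Equivalently: $b^k a^n$ commutes with the free $\bbZ$-action of $a$, and its "rotation component'' in the $\theta$-direction is a nonzero constant $n$, so it moves every point.) The remaining case is $n=0$, i.e. I must show $b^{k}$ is fixed point free for every $k\neq 0$. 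Here is where Brouwer theory enters exactly as in the remark following the theorem: $b$ is a Brouwer homeomorphism, so by the Brouwer plane translation theorem it has no periodic points, hence no power $b^{k}$ with $k\neq 0$ has a fixed point.

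The main obstacle I anticipate is making the case $n\neq 0$ fully rigorous with only the tools quoted in the excerpt. The clean way is to observe that $b^k a^n$ belongs to $\widetilde{\mathrm{Homeo}}_{0}(\bbR^2)$ for the $\theta$-translation action, that it induces a homeomorphism of the plane $\bbR^2\setminus\{0\}$ via the covering $p$ — actually more simply, it induces a homeomorphism $\bar g$ of the annulus $\bbA = \bbR^2/a$ — and that the induced map $\bar g$ is isotopic to the $n$-th power of a Dehn-twist-free rotation generator, so it acts on $\pi_1(\bbA)=\bbZ$ trivially but shifts the universal cover by $n\neq 0$; a point fixed by $b^k a^n$ would give a loop in $\bbA$ that is null-homotopic yet lifts to a path from $x$ to $x+n$, a contradiction. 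I would phrase this as: if $b^k a^n(x)=x$ then applying $a^{-n}$ shows $x$ and $b^k(x)$ differ by the deck transformation $a^{-n}$, but $b^k$ maps each $a$-orbit to an $a$-orbit \emph{preserving} the cyclic order / the $\theta$-coordinate mod $1$, forcing $n\equiv 0$, hence $n=0$. Once the two cases are assembled the corollary follows, the only genuinely deep input being Theorem~\ref{theo} and the Brouwer translation theorem, both available to us.
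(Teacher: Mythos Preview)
Your argument for the case $n\neq 0$ rests on a false premise. You assert that $b^{k}a^{n}$ ``commutes with the free $\bbZ$-action of $a$'' and therefore descends to $\bbR^2/a$; but from $aba^{-1}=b^{-1}$ one gets $a(b^{k}a^{n})a^{-1}=b^{-k}a^{n}$, so $b^{k}a^{n}$ commutes with $a$ only when $k=0$. For the same reason $b$ does not carry $a$-orbits to $a$-orbits (since $ba=ab^{-1}$, the image of the $a$-orbit of $x$ contains both $b(x)$ and $ab^{-1}(x)$, which need not be $a$-related), so your fallback claim that ``$b^k$ preserves the $\theta$-coordinate mod $1$'' is unjustified. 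What is true is that $b^{k}a^{n}$ commutes with $a^{2}$; but on $\bbR^2/a^{2}$, when $n$ is even, $b^{k}a^{n}$ induces the same map as $b^{k}$, and nothing you have written excludes fixed points of $\bar b^{k}$ on that annulus. So the ``nonzero rotation component'' heuristic simply does not go through for even $n\neq 0$.

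The paper's proof treats the two parities of the $a$-exponent separately. For odd exponent it uses the group identity $(a^{p}b^{q})^{2}=a^{2p}$ (equivalently $(b^{k}a^{n})^{2}=a^{2n}$ when $n$ is odd), so a fixed point would be periodic for $a$, contradicting Brouwer theory for $a$. The even, nonzero exponent is the substantive case: it is derived from Corollary~\ref{coro.wandering}, whose proof uses that the index $I(b,a)$ is a genuine half-integer (forced by the anti-commutation), hence $I(b,a^{2p})=2p\,I(b,a)\neq 0$, and then Lemma~\ref{lem.index} shows every translation domain for $b$ is disjoint from its $a^{2p}$-image. That index obstruction is precisely the rigorous replacement for the rotation-number intuition you were reaching for; without it, the even case does not follow from Theorem~\ref{theo} and Brouwer theory alone.
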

 The analogy can be pushed a little further. An action of a group $G$ on a topological space $X$ is said to be \emph{wandering} if every point has a neigbourhood that is disjoint from all its $G$-images. It is part of Brouwer theory that \emph{any free action of the group $\bbZ$ is wandering} (this point of view already appears in~\cite{Thu97}, section~3.5). The same is almost true for the group $BS(1,-1)$.
\begin{coro}
\label{coro.wandering}
Let $a,b$ be as in Theorem~\ref{theo}.
Then the action of the index $2$ abelian subgroup $<a^2,b>$ of $BS(1,-1)$ is wandering. 
More precisely, every disk $D$ such that $b(D) \cap D = \emptyset$ is disjoint from its image under $a^{2p}b^q$ for every $(p,q) \neq (0,0)$.
\end{coro}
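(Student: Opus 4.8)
The plan is to leverage Theorem~\ref{theo} to normalize $a$, and then analyze the quotient dynamics. First I would use the theorem to assume, after conjugation, that $a$ is the translation $(r,\theta)\mapsto(r,\theta+\pi)$; then $a^2$ is the translation $(r,\theta)\mapsto(r,\theta+2\pi)$, so the quotient $\bbR^2/a^2$ is an open annulus $\bbA$, and $a^2$ acts trivially on it. The relation $aba^{-1}=b^{-1}$ shows that $b$ normalizes the group $\langle a^2\rangle$ (indeed $a^2 b = b a^{-2}$, so $b$ conjugates $a^2$ to $a^{-2}$), hence $b$ descends to a homeomorphism $\bar b$ of $\bbA$. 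Since $b$ is orientation preserving and commutes appropriately, $\bar b$ is an orientation preserving homeomorphism of the annulus; one checks from the relation that $\bar b$ exchanges the two ends of $\bbA$ (the same Lefschetz-type argument as in the remark after Theorem~\ref{theo} shows this is forced, or one reads it off the isotopy class). The subgroup $\langle a^2,b\rangle$ being abelian (from $a^2$ central in it up to the computation $a^2 b a^{-2}=b$, using $aba^{-1}=b^{-1}$ twice), its action on $\bbR^2$ descends exactly to the cyclic action generated by $\bar b$ on $\bbA$, so it suffices to show $\bar b$ generates a wandering $\bbZ$-action on $\bbA$.

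Next I would transport the problem to the plane to invoke Brouwer theory. The homeomorphism $\bar b$ of the open annulus $\bbA$ exchanges the two ends, so $\bar b^2$ preserves each end; but more usefully, $\bar b$ has no fixed point (since $b$ is fixed point free and, if $\bar b(x)=x$ then the $b$-orbit of a lift of $x$ would be $\langle a^2\rangle$-invariant, forcing $b$ to permute a discrete vertical line, and then as in the remark $b$ or $b$ composed with the relation produces a periodic Brouwer point — contradiction with Brouwer's theorem). Hmm, more directly: a fixed point of $\bar b$ on $\bbA$ lifts to a point $z$ with $b(z)=a^{2k}(z)$ for some $k$; then $a^{-2k}b$ fixes $z$; but $a^{-2k}b$ and $b$ generate the same (abelian) group containing a fixed-point element, contradicting that the action of $\langle a^2,b\rangle$ — wait, we have not yet proven freeness of the full group here, only of $a$ and $b$ individually. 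I would instead argue: $\bar b$ exchanges the two ends, hence extends to a homeomorphism $\hat b$ of the two-sphere $\bbA\sqcup\{\pm\infty\}$ swapping $\pm\infty$; then $\hat b^2$ fixes both poles and, being orientation preserving on $S^2$, restricts to an orientation preserving homeomorphism of the plane $\bbA\sqcup\{+\infty\}$ minus $\{-\infty\}$ — i.e. $\bar b^2$ is a Brouwer homeomorphism of a once-punctured plane $\cong\bbR^2$ provided it is fixed-point free on $\bbA$. Fixed-point freeness of $\bar b$ (hence of $\bar b^2$ away from the poles) follows because a fixed point of $\bar b$ gives a periodic point of the Brouwer homeomorphism $b$ on $\bbR^2$, which Brouwer's theorem forbids.

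Granting that $\bar b^2$ is a Brouwer homeomorphism of the punctured plane, Brouwer theory (the fact cited in Section~\ref{sub.Brouwer} that a free $\bbZ$-action on the plane is wandering) gives that $\bar b^2$, hence $\bar b$, generates a wandering action on $\bbA$: every point of $\bbA$ has a neighbourhood disjoint from all its iterates under $\bar b$. Pulling back, every point of $\bbR^2$ has a neighbourhood $U$ such that $U$ is disjoint from $a^{2p}b^q(U)$ for all $(p,q)\ne(0,0)$; this is exactly the wandering statement for $\langle a^2,b\rangle$. For the ``more precisely'' clause, I would take a disk $D$ with $b(D)\cap D=\emptyset$ and show directly: its projection $\bar D$ to $\bbA$ satisfies $\bar b(\bar D)\cap \bar D=\emptyset$ (because $b(D)\cap a^{2}(D)=\emptyset$ etc. follows from $D$ being a small disk and the covering being by the free $\bbZ=\langle a^2\rangle$ action — here I must be careful that $D$ projects injectively, which holds if $D$ is small enough; for a general disk $D$ with $b(D)\cap D=\emptyset$ one argues that $D$ and $a^{2p}D$ are disjoint for $p\ne 0$ because $a$ is a translation, and then $b(D)\cap a^{2p}D = a^{2p}(a^{-2p}b(D)\cap D)$, and $a^{-2p}b = b a^{2p'}$ up to the relation, reducing to the case $p=0$). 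Then Franks-type / Brouwer arguments for annulus homeomorphisms exchanging the ends — or simply the already-established wandering property applied to the disk — give $\bar b^q(\bar D)\cap\bar D=\emptyset$ for all $q\ne 0$, and combined with $a^{2p}$-translation-disjointness this yields $a^{2p}b^q(D)\cap D=\emptyset$ for all $(p,q)\ne(0,0)$.

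\medskip

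The main obstacle I anticipate is the passage from $b$ fixed-point free on $\bbR^2$ to $\bar b$ (or $\bar b^2$) being a genuine \emph{Brouwer} homeomorphism of a plane: one must (i) correctly identify that $\bar b$ exchanges the two ends of $\bbA$ and hence that $\bar b^2$ fixes the ends and extends to the plane, and (ii) verify $\bar b$ has no fixed point in $\bbA$, which requires translating a hypothetical fixed point of $\bar b$ back into a periodic point of the Brouwer homeomorphism $b$ and invoking the no-periodic-points theorem — the bookkeeping with the relation $aba^{-1}=b^{-1}$ (so that $a^{2k}b$ and $b$ interact correctly) is where care is needed. The ``more precisely'' refinement, extending the conclusion from small disks to arbitrary disks $D$ with $b(D)\cap D=\emptyset$, is the second delicate point, and is handled by exploiting that $a$ is a translation so that the $a^{2p}$-translates of any disk are pairwise disjoint, reducing everything to the annulus quotient.
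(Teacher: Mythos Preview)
Your approach has a genuine gap at its core: the claim that $\bar b$ exchanges the two ends of the annulus $\bbA=\bbR^2/a^2$ is false. Since $b$ \emph{commutes} with $a^2$, the induced map $\bar b$ lies in the identity component of $\homeo_0(\bbA)$ and therefore \emph{fixes} both ends (the end-swapping isotopy class on the open annulus lifts to homeomorphisms that \emph{anti}-commute with the deck translation). You are conflating this with the remark after Theorem~\ref{theo}, which concerns the quotient $\bbR^2/b$ and the induced map $\bar a$; there the relevant relation is $aba^{-1}=b^{-1}$, i.e.\ $a$ anti-commutes with $b$, which is precisely why $\bar a$ swaps ends. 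Once $\bar b$ fixes both ends, your plan to compactify one end and obtain a Brouwer homeomorphism of a plane collapses: the added point becomes a fixed point.

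The second gap is the fixed-point-freeness of $\bar b$ on $\bbA$. As you yourself notice, a fixed point of $\bar b$ corresponds to a fixed point of some $a^{-2k}b$ in $\bbR^2$, and ruling this out is exactly (a special case of) Corollary~\ref{coro.free}. But in the paper the logical order is the reverse: Corollary~\ref{coro.wandering} is proved first and then used to deduce Corollary~\ref{coro.free}. Your attempted shortcut, ``a fixed point of $\bar b$ gives a periodic point of $b$'', is simply false: $b(z)=a^{2k}(z)$ does not make $z$ periodic for $b$.

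The paper's argument avoids both issues by never passing to the annulus. After normalizing $a$ to a translation, it introduces the index $I(b,a)$ (Definition~\ref{def.index}): because $a$ anti-commutes with $b$, this index is a \emph{half-integer}, hence automatically nonzero, and so $I(b,a^{2p})=2p\,I(b,a)\neq 0$ for $p\neq 0$. Given a disk $D$ with $b(D)\cap D=\emptyset$, Brouwer theory provides a translation domain $O$ for $b$ containing $D$ (hence $b^q(D)$), and Lemma~\ref{lem.index} says that $I(b,a^{2p})\neq 0$ forces $a^{2p}(O)\cap O=\emptyset$. This immediately gives $a^{2p}b^q(D)\cap D=\emptyset$ for $p\neq 0$, while the case $p=0$ is classical. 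The half-integrality of $I(b,a)$ is the key idea you are missing; it is what replaces the (unavailable) fixed-point-freeness of $\bar b$.
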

Note that the action described on figure~\ref{fig.simple-action} is not wandering, indeed any open set meeting one of the vertical $b$-invariant lines meets its images under $b^{\pm n} a$ for every $n$ large enough (see figure~\ref{fig.non-wandering}).
Thus we cannot dispose of the index $2$ subgroup.

\def\JPicScale{1.3}
\begin{figure}[htbp]
\begin{center}
\includegraphics{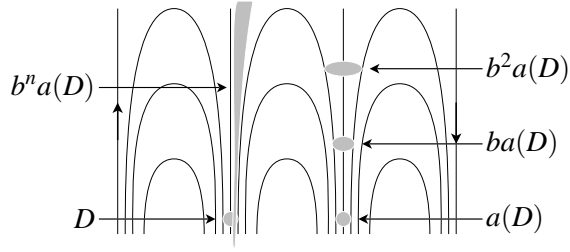}
\caption{This action is not wandering}
\label{fig.non-wandering}
\end{center}
\end{figure}

\bigskip 

In order to prove the corollaries we generalize a definition from~\cite{LeR04}.
Let $\tau$ denotes an affine translation $(x,y) \mapsto (x+\vec v,y)$, and $b$ be any Brouwer homeomorphism. First assume that $b$ commutes with $\tau$.
 Remember that the \emph{index of $b$ along a curve $\gamma : [0,1] \to \bbR^2$} is the real number given by the total angular variation of the vector $\gamma(t) b(\gamma(t))$ when $t$ goes from $0$ to $1$.
Consider a curve $\gamma$ joining some point $x$ to the point $\tau x$. Since $b$ commutes with $\tau$, the vectors $x b(x)$ and $\tau(x) b\tau(x)$ are equal; thus the index of $b$ along $\gamma$ is an integer. The space of curves joining some point to its image under $\tau$ is connected, thus this number does not depend on the choice of the curve $\gamma$, nor on the point $x$; we denote it by $I(b,\tau)$.

Assume now that $\tau b \tau ^{-1} = b^{-1}$, then for any point $x$ the vectors $x b(x)$ and $\tau b(x)  b\tau b(x)$ are opposite. Thus the index of $b$ along a curve joining some point to its image under $ \tau b$ is a half-integer, it does not depend on the choices, again we denote it by $I(b, \tau)$. 
For example, for the action of $BS(1,-1)$ described on Figure~\ref{fig.simple-action}, we have $I(b,a^k) = -\frac{k}{2}$.
Let us summarize this construction.
\begin{defi}
\label{def.index}
If either $b$ commutes or ``anti-commutes'' with an affine translation $\tau$, we have defined a number $I(b,\tau)$ which we call the \emph{index of $b$ relative to $\tau$}. 
\end{defi}

The following lemma generalizes Affirmation 4.15 of~\cite{LeR04}.
\begin{lemm}
\label{lem.invariant}
The number $I(b,\tau)$ is a conjugacy invariant: if $H$ is a homeomorphism that commutes with $\tau$ then $I(HbH^{-1}, \tau) = I(b,\tau)$.
\end{lemm}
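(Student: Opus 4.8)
The plan is to reduce to the case where $b$ commutes with $\tau$, then to express $I(b,\tau)$ as the winding number of a concrete loop in $\bbR^2\setminus\{0\}$ built from $b$ and $\tau$, and finally to deform $H$ to the identity through homeomorphisms that commute with $\tau$, which deforms that loop without changing its winding number.

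First I would treat the anti-commuting case by reducing it to the commuting case. If $\tau b\tau^{-1}=b^{-1}$, then $\tau^2$ commutes with $b$, and a short computation from Definition~\ref{def.index} gives $I(b,\tau^2)=2\,I(b,\tau)$: concatenating a curve $\gamma$ joining a point $x$ to $\tau b(x)$ with the curve $\tau b\circ\gamma$ produces a curve joining $x$ to $\tau^2 x$, and the index of $b$ along $\tau b\circ\gamma$ equals the index of $b$ along $\gamma$ because along the two pieces the displacement vectors $z\mapsto b(z)-z$ are exactly opposite, which does not affect angular variation. Since $H$ commutes with $\tau$ it commutes with $\tau^2$, and $b':=HbH^{-1}$ again satisfies $\tau b'\tau^{-1}=(b')^{-1}$; hence once the commuting case is known, $I(b',\tau)=\tfrac12 I(b',\tau^2)=\tfrac12 I(b,\tau^2)=I(b,\tau)$.

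For the commuting case, assume $b\tau=\tau b$ and set $b'=HbH^{-1}$, which also commutes with $\tau$. The displacement map $\delta_b:x\mapsto b(x)-x$ takes values in $\bbR^2\setminus\{0\}$ (as $b$ has no fixed point) and is $\tau$-invariant, so for a curve $\gamma:[0,1]\to\bbR^2$ with $\gamma(1)=\tau(\gamma(0))$ the map $\delta_b\circ\gamma$ is a loop in $\bbR^2\setminus\{0\}$ whose winding number about the origin is $I(b,\tau)$. Because $H$ commutes with $\tau$, the curve $H\circ\gamma$ again joins a point to its $\tau$-image, so $I(b',\tau)$ is the winding number of the loop $t\mapsto \delta_{b'}(H\gamma(t))=H(b\gamma(t))-H(\gamma(t))$. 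It thus remains to see that the two loops $t\mapsto b\gamma(t)-\gamma(t)$ and $t\mapsto H(b\gamma(t))-H(\gamma(t))$ have the same winding number, and for this I would use that $H$, being orientation preserving and commuting with $\tau$, is isotopic to the identity through homeomorphisms commuting with $\tau$: $H$ descends to an orientation preserving homeomorphism $\bar H$ of the annulus $\bbR^2/\tau$, the commutation with $\tau$ prevents $\bar H$ from reversing the generator of $\pi_1$ (a lift of such a map would conjugate $\tau$ into $\tau^{-1}$), so $\bar H$ preserves the two ends and is isotopic to the identity; lifting such an isotopy and concatenating with a path of translations yields an isotopy $(H_s)_{s\in[0,1]}$ with $H_0=\mathrm{id}$, $H_1=H$, and $H_s\tau=\tau H_s$ for all $s$. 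Then $L(s,t):=H_s(b\gamma(t))-H_s(\gamma(t))$ is continuous, never zero (since $H_s$ is injective), and for each $s$ a loop in $t$ (here one uses $H_s\tau=\tau H_s$ and that $\tau$ is a translation), hence a free homotopy of loops in $\bbR^2\setminus\{0\}$ between the two loops above; their winding numbers therefore coincide.

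The main obstacle is the isotopy statement: one must rule out the end-exchanging homeomorphisms of the annulus, and this is exactly the place where orientation preservation is used — for an orientation reversing $H$, or for one conjugating $\tau$ into $\tau^{-1}$, the sign of the index would flip. Everything else — the reduction via $\tau^2$, the identification of $I(b,\tau)$ with a winding number, and the homotopy invariance of the winding number — is routine.
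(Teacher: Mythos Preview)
Your proof is correct and follows essentially the same route as the paper: both rely on the path-connectedness of the space of orientation preserving homeomorphisms commuting with $\tau$ (which the paper attributes to Kneser's theorem) and then invoke continuity of the index along an isotopy. The paper is terser, treating both cases at once by observing that $t\mapsto I(H_t b H_t^{-1},\tau)$ is a continuous $\tfrac12\bbZ$-valued function and hence constant, whereas you spell out the winding-number interpretation and add a harmless (but unnecessary) reduction to the commuting case via $\tau^2$.
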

\begin{proof}
The space of homeomorphisms that commute with $\tau$ is arcwise connected (this is an easy consequence of Kneser theorem~\cite{Kne26}). 
Whenever $(H_{t})$ is a continuous family 
of homeomorphisms that commute with $\tau$, the number $I_{t} = I(H_{t}bH_{t}^{-1}, \tau)$ is an integer or a half integer that depends continuously on $t$, thus it is constant.
\end{proof}

A \emph{translation domain} for a Brouwer homeomorphism $b$ is a simply connected open subset $O$ of the plane such that $b(O)=O$ and the restriction of $b$ to $O$ is conjugate to a plane translation.
\begin{lemm}
\label{lem.index}
Let $b'$ be a Brouwer homeomorphism that commutes with an affine translation $\tau$.  Assume there exists some translation domain $O$ for $b$ such that $O \cap \tau(O) \neq \emptyset$. Then the index $I(b,\tau)$ is zero.
\end{lemm}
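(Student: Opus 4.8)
The plan is to reduce the computation of $I(b,\tau)$ to a winding number \emph{inside} the translation domain $O$, transport it to the standard model $(\bbR^2,\text{translation})$ through the conjugacy, and read off that it vanishes. First, since $O\cap\tau(O)\neq\emptyset$, choose $z\in O\cap\tau(O)$ and set $x:=\tau^{-1}(z)$, so that both $x$ and $\tau(x)=z$ lie in $O$; as $O$ is connected, pick a curve $\gamma:[0,1]\to O$ from $x$ to $\tau(x)$. Because $b$ commutes with the translation $\tau$, the vectors $b(x)-x$ and $b(\tau x)-\tau x$ are equal, so the index of $b$ along $\gamma$ is an integer and by definition equals $I(b,\tau)$. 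Since moreover $b(O)=O$ and $O$ is simply connected, the winding number of $t\mapsto b(\gamma(t))-\gamma(t)$ depends only on the endpoints $x$ and $\tau x$, not on the particular curve $\gamma\subset O$.

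Next, fix an orientation-preserving homeomorphism $h:O\to\bbR^2$ conjugating $b|_O$ to a translation $S:\xi\mapsto\xi+\vec e$, and write $\phi:=h^{-1}$, an orientation-preserving topological embedding of $\bbR^2$ onto $O$. With $q:=h\circ\gamma$ one has $\gamma=\phi\circ q$ and $b\circ\gamma=\phi\circ(q+\vec e)$, hence $I(b,\tau)$ is the winding number of $t\longmapsto \phi(q(t)+\vec e)-\phi(q(t))$. Two identities will drive the argument, valid on $U:=h(O\cap\tau^{-1}(O))$ where the ``transition map'' $T:=h\tau h^{-1}$ is defined: first $\phi\circ T=\tau\circ\phi$ (a restatement of the definition of $T$); second, $T$ commutes with $S$, because both $h$ and $h\circ\tau^{-1}$ conjugate $b$ to $S$ over the overlap $O\cap\tau(O)$, so their transition $h\tau^{-1}h^{-1}=T^{-1}$ commutes with $S$. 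In particular $q(1)=T(q(0))$, and these identities force the displacement vectors at the two ends of $q$ to agree, as they must for the winding number to be defined.

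The heart of the proof is to show that this winding number is $0$, by comparing the loop $t\mapsto(\gamma(t),b\gamma(t))$ with its model counterpart. In the configuration space $\mathrm{Conf}:=\{(p,p')\in\bbR^2\times\bbR^2:p\neq p'\}$ modulo the diagonal action of $\tau$ — which is homotopy equivalent to $(\bbR^2/\tau)\times(\bbR^2\setminus\{0\})$ via $(p,p')\mapsto([p],\,p'-p)$ — the path $t\mapsto(\gamma(t),b\gamma(t))$ closes up, since it runs from $(x,bx)$ to its $\tau$-translate, into a loop whose $(\bbR^2/\tau)$-component generates $\pi_1(\bbR^2/\tau)$ (because $\gamma$ runs from $x$ to $\tau x$) and whose $(\bbR^2\setminus\{0\})$-component is exactly $I(b,\tau)$. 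Through $h$ this loop is the image under $\phi\times\phi$ of the model loop $t\mapsto(q(t),q(t)+\vec e)$, whose displacement is the \emph{constant} vector $\vec e$, hence of vanishing $(\bbR^2\setminus\{0\})$-component; and $\phi\times\phi$, being an orientation-preserving embedding of $\mathrm{Conf}$, acts as the identity on $H_1(\mathrm{Conf})\cong\bbZ$ (a Jordan–Schoenflies computation: an orientation-preserving embedding sends a small loop encircling a point to a loop with the same winding about its image). Feeding in the two compatibility identities for $T$ should then pin $I(b,\tau)$ to $0$.

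I expect this last step to be the real obstacle: the conjugating homeomorphism $h$ of the translation domain does \emph{not} commute with $\tau$, so Lemma~\ref{lem.invariant} cannot be invoked directly, and the comparison must be carried out by hand, with care because the curves involved are not loops in $\bbR^2$ but only descend to loops after quotienting by $\tau$; controlling $\phi\times\phi$ on that quotient is precisely where the relations $\phi\circ T=\tau\circ\phi$ and $T\circ S=S\circ T$ have to be used. A more geometric route, avoiding this bookkeeping, would be to extract from the product structure of $O$ a genuine Brouwer line for $b$ that meets its own $\tau$-translate, and to compute $I(b,\tau)$ as an algebraic intersection number, which manifestly vanishes in that configuration.
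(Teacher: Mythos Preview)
Your configuration-space argument has a genuine gap, and it is exactly the one you flag. The map $\phi\times\phi$ does preserve the winding of \emph{closed} loops in $\mathrm{Conf}$, but the path $t\mapsto(\gamma(t),b\gamma(t))$ is not closed in $\mathrm{Conf}$: it only closes modulo the diagonal $\tau$-action, and $\phi$ does \emph{not} intertwine $\tau$ with anything on the model side (only with the partially defined $T$), so $\phi\times\phi$ does not descend to the quotient. If you try to force closure by appending a model path $(r(t),r(t)+\vec e)$ from $q(1)$ back to $q(0)$, its image under $\phi\times\phi$ is a path of the form $(\delta(t),b\delta(t))$ with $\delta\subset O$ running from $\tau x$ back to $x$; the resulting closed loop is $(\gamma*\delta,\,b(\gamma*\delta))$ with $\gamma*\delta$ a loop in the simply connected set $O$, so its total winding is trivially $0$, and decomposing gives only the tautology $I(b,\tau)-I(b,\tau)=0$. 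Your earlier remark that the winding ``depends only on the endpoints $x$ and $\tau x$'' is already true for \emph{any} curve in $\bbR^2$, so restricting to $O$ gains nothing at that stage.

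The paper's proof is both shorter and different in spirit: it uses the translation-domain structure not to transport the winding computation, but to manufacture a single arc $\gamma\subset O$ from some $z$ to $\tau z$ that is \emph{free} for $b$, i.e.\ $b(\gamma)\cap\gamma=\emptyset$. This is easy once one passes to the model where $b$ is a translation (choose $z$ so that $\tau z$ is not in the $b$-orbit of $z$, then lift a simple arc from the cylinder $O/b$). Given such a $b$-free arc from a point to its $\tau$-image, the vanishing of $I(b,\tau)$ is a known fact (Lemma~4.42 of~\cite{LeR04}), and the proof ends there. Your closing remark about extracting a Brouwer line and reading off an intersection number is in the right direction, but the actual mechanism is the existence of a free arc, not an intersection count.
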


\begin{proof}
We first remark that if $z,z'$ are two points in the plane that are not in the same $\tau$-orbit, then there exists an arc $\gamma$ joining $z$ and $z'$ which is \emph{free} for $\tau$, that is, $\tau(\gamma) \cap \gamma = \emptyset$. Indeed, $\gamma$ may be obtained as the lift of some simple arc in the quotient space $\bbR^2/\tau$ belonging to the right homotopy class.

Under the hypotheses of the lemma, we may find some point $z \in O$ such that $\tau(z)$ belongs to $O$ and is not in the $b$-orbit of $z$.
Since $z,\tau(z)$ belong to the same translation domain, the previous remark provides an arc $\gamma$ joining $z$ to $z'  = \tau(z)$ which is free for $b$, that is, $\gamma \cap b(\gamma) = \emptyset$. Now the Lemma follows from Lemma 4.42 in~\cite{LeR04}.
\end{proof}

\begin{proof}[Proof of corollary~\ref{coro.free} and~\ref{coro.wandering}] 
Let $a,b$ be Brouwer homeomorphisms such that $aba^{-1} = b^{-1}$. We apply Theorem~\ref{theo}: up to conjugacy, we may assume that $a = (x,y) \mapsto (x+1,y)$. 

We first prove Corollary~\ref{coro.wandering}. 
Consider some element $a^{2p}b^q$ with $(p,q)\neq(0,0)$. We first notice that the number $I(b,a)$ is (by definition) a half-integer; and thus the index $I(b,a^{2p}) = 2pI(b,a)$ is non null.
On the other hand, consider any topological disk  $D$ such that $b(D) \cap D = \emptyset$. Then $b^q(D) \cap D = \emptyset$ for every $q \neq 0$, which solves the case $p=0$ (see for example~\cite{Gui94}). Now assume $p \neq 0$.
According to Brouwer theory, there exists a translation domain $O$ that contains $D$, and thus also $b^q(D)$ (see for example~\cite{LeR99}, Theorem~11). Since the index $I(b,a^{2p})$ is not $0$, Lemma~\ref{lem.index} tells us that $a^{2p}(O) \cap O = \emptyset$. \emph{A fortiori} we get $a^{2p}b^q(D) \cap D = \emptyset$. 

Let us prove Corollary~\ref{coro.free}.
Consider some element having normal form $a^p b^q$ with $(p,q) \in \bbZ^2 \setminus\{(0,0\}$ (see Lemma~\ref{lem.normal-form} below). We want to prove that this element is fixed point free.
If $p$ is even then the result follows from Corollary~\ref{coro.wandering}.
 Assume that $p$ is odd. Then we have the identity $(a^p b^q )^2= a^{2p}$.
 Thus  if $a^p b^q$ has a fixed point, this point is also fixed for $a^{2p}$. Since a Brouwer homeomorphism has no periodic point, this is a contradiction.
 \end{proof}

\subsection{A finer description}
\label{ss.finer}
 In the introduction we have claimed on the one hand that there exist uncountably many free actions of $BS(1,-1)$ on the plane, and on the other hand that they are quite rigid.   We would like to provide some evidence to support these claims.
For this we will sketch a finer description of these actions. Here we adopt a more relaxed style ; details are left to the reader.

Consider a free action  $BS(1,-1) \to \homeo_{0}(\bbR^2)$. According to Theorem~\ref{theo}, $a$ is conjugate to a translation. Thus the quotient space $\bbR^2/a^2$ is homeomorphic to an annulus. The map $a$ induces on $\bbR^2/a^2$ an order $2$ homeomorphism $\hat a$, conjugate to the order $2$ rotation of the annulus. The map $b$ induces a homeomorphism $\hat b$ of $\bbR^2/a^2$ that anti-commutes with $\hat a$.
We consider the compactified annulus  $\bbR^2/a^2 \cup \{\pm\infty\}$, which is homeomorphic to the $2$-sphere. Then $\hat b$ extends continuously to a map of the $2$-sphere that fixes both points $\pm \infty$, and has no other fixed point.
\begin{lemm}
The fixed point index of the fixed points $\pm \infty$ for $\hat b$ are even. 
\end{lemm}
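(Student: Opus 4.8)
The plan is to relate the fixed point indices of $\pm\infty$ for $\hat b$ on the sphere $\bbR^2/a^2\cup\{\pm\infty\}$ to the index invariant $I(b,a)$ introduced in Definition~\ref{def.index}. Recall that $a$ is (after conjugacy) the translation $(x,y)\mapsto(x+1,y)$, that $b$ anti-commutes with $a$, and that $I(b,a)$ is a half-integer. The key point is that the map $\hat b$ near the end $+\infty$ of the annulus $\bbR^2/a^2$ ``sees'' the dynamics of $b$ in a half-plane far out in the direction of increasing (or decreasing) $x$, and the winding of the vector field $z\mapsto \overrightarrow{z\,b(z)}$ there is measured precisely by $I(b,a)$ (or $2I(b,a)$, after accounting for the degree-$2$ covering $\bbR^2\setminus(\text{axis})\to \bbR^2/a^2$). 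So the first step is to compute the index of $\hat b$ at $+\infty$ as an explicit (half-)integer multiple of $I(b,a)$, plus a universal correction coming from the local model of the sphere point.

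First I would set up the local picture at $+\infty$. Since $\hat b$ fixes $+\infty$ and has no other fixed point in a punctured neighbourhood, the fixed point index $\mathrm{ind}(\hat b,+\infty)$ is well-defined and equals $1$ minus the degree of $z\mapsto \overrightarrow{z\,\hat b(z)}/\|\cdot\|$ along a small loop around $+\infty$, oriented as the boundary of that neighbourhood. Pulling this loop back to $\bbR^2$ under the covering $p:\bbR^2\to\bbR^2/a^2$, a simple loop around $+\infty$ lifts to an arc $\gamma$ joining some point $z$ to $a^{2k}(z)$ for an appropriate $k\in\{1,-1\}$ (depending on which end $+\infty$ corresponds to): the loop around the end winds once, so it lifts to a path connecting a point to its image under one generator of the deck group acting near that end, which is $a^{\pm2}$. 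The angular variation of $\overrightarrow{z\,b(z)}$ along $\gamma$ is then, by the very definition of $I(b,\tau)$, equal to $I(b,a^{\pm2})=\pm2I(b,a)$, up to the usual bookkeeping about whether one tracks $\overrightarrow{z\,b(z)}$ or $\overrightarrow{z\,b^{-1}(z)}$ on the relevant half of the loop (the anti-commutation $aba^{-1}=b^{-1}$ is exactly what makes the two halves glue into an honest closed loop and makes $I(b,a)$ a half-integer).

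Assembling these pieces, $\mathrm{ind}(\hat b,\pm\infty)=1-(\pm2I(b,a))=1\mp2I(b,a)$, which is odd plus an even integer, hence \emph{odd}, not even --- so I must be careful about the precise normalisation. The resolution is that the sphere point $\pm\infty$ is a single point compactifying an end of an annulus, and a small circle around it maps to a circle in $\bbR^2/a^2$ that is \emph{non-contractible} in the annulus, i.e. homotopic to the core circle; lifting it to $\bbR^2$ gives a path from $z$ to $a^{\pm2}(z)$, but the loop around $\pm\infty$ also reverses orientation relative to the core circle, contributing an extra factor that shifts the count by $1$. Carrying this out correctly, one finds $\mathrm{ind}(\hat b,+\infty)=-2I(b,a)$ and $\mathrm{ind}(\hat b,-\infty)=2I(b,a)$ (consistent with the total being $0$, as forced on the sphere by Lefschetz since $\hat b$ is isotopic to the identity on $\bbR^2/a^2$), and since $2I(b,a)$ is an even integer --- because $I(b,a)$ is a half-integer, $4I(b,a)\in2\bbZ$, wait: $2I(b,a)\in\bbZ$ and we need it even. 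This last parity is the actual content: $2I(b,a)$ is even iff $I(b,a)\in\bbZ$, which is false in general.

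Therefore the honest argument must instead run: the relevant multiple is $4I(b,a)$ or $2I(b,a^2)$ with $a^2$ a translation with which $b$ \emph{commutes} (since $ab^{}a^{-1}=b^{-1}$ gives $a^2ba^{-2}=b$), so $I(b,a^2)$ is a genuine \emph{integer}; and the correct computation gives $\mathrm{ind}(\hat b,+\infty)=1-(1+2I(b,a^2))=-2I(b,a^2)$, an even integer. So the steps, cleanly ordered, are: (1) identify the deck transformation seen near each end of $\bbR^2/a^2$ as $a^{\pm2}$, with which $b$ commutes; (2) express $\mathrm{ind}(\hat b,\pm\infty)$ via the degree of $\overrightarrow{z\,\hat b(z)}$ along a loop around the end; (3) lift that loop to a $\tau$-equivariant arc for $\tau=a^{\pm2}$ and invoke Lemma~\ref{lem.invariant}-style reasoning to equate the angular variation with $I(b,a^{\pm2})\in\bbZ$; (4) compute the index as $1$ minus an affine-integer function of that integer, with the affine constant pinned down by the orientation reversal of the end-loop relative to the core circle, and conclude it is $2I(b,a^2)$ up to sign, hence even. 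The main obstacle is step (4): getting the universal additive/multiplicative constant exactly right, i.e. keeping straight the orientation conventions for the index on the sphere versus the winding number in $\bbR^2$, and confirming that the contribution of ``$\hat a$ is the order-$2$ rotation'' does not introduce an extra odd term. Once the normalisation in step (4) is fixed by testing on the explicit model of Figure~\ref{fig.simple-action} (where $I(b,a^k)=-k/2$, so $I(b,a^2)=-1$ and the index at $\pm\infty$ should come out $\pm2$), the parity statement follows immediately.
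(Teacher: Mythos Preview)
Your proposal is not a proof: it is a sequence of guesses at a formula relating $\mathrm{ind}(\hat b,\pm\infty)$ to $I(b,a)$ or $I(b,a^2)$, and you explicitly concede that the decisive step~(4) --- pinning down the additive/multiplicative constant --- is left to ``testing on the explicit model'', which establishes nothing.

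In fact your first guess was essentially right and you talked yourself out of it. You wrote $\mathrm{ind}(\hat b,\pm\infty)=1\mp 2I(b,a)$ and declared this ``odd plus an even integer, hence odd''. But $2I(b,a)$ is \emph{odd}, not even: $I(b,a)\in\bbZ+\tfrac12$, so $2I(b,a)\in 2\bbZ+1$. Hence $1\mp 2I(b,a)$ is even, which is exactly the statement to be proved. The correct version of your computation is this: a small positively oriented loop around $+\infty$ in the sphere corresponds (in the annulus $\bbR^2/a^2$) to a core circle, which lifts in $\bbR^2$ to an arc from $z$ to $a^{2}z$; the winding of $\overrightarrow{z\,b(z)}$ along this lift is $I(b,a^2)=2I(b,a)$, an odd integer. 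The only missing piece is the ``twist'' contributed by the chart at the end: the trivialisation of the tangent bundle coming from $\bbR^2$ via the covering and the trivialisation coming from the local chart at $+\infty$ differ by a framing that winds exactly once along the core circle, contributing $+1$. This gives $\mathrm{ind}(\hat b,\pm\infty)=1\pm I(b,a^2)$ (the two signs at the two ends; note the sum is $2$, consistent with Lefschetz), hence even. Your later formula $\mathrm{ind}=-2I(b,a^2)$ is wrong (it would force the Lefschetz sum to vanish) and, more to the point, it no longer uses the key parity fact that $I(b,a^2)$ is odd.

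The paper's proof is entirely different and much shorter: it never lifts to $\bbR^2$ and never invokes $I(b,\tau)$. Instead it works directly in a chart near $+\infty$ in which $\hat a$ is the linear half-turn $z\mapsto -z$. The anti-commutation $\hat a\hat b\hat a^{-1}=\hat b^{-1}$ then gives, by a one-line computation, that the displacement vector $V(z)=\hat b(z)-z$ satisfies $V(\hat a\hat b\,z)=V(z)$. Since $(\hat a\hat b)^2=\mathrm{Id}$, a curve $\gamma$ from $x$ to $\hat a\hat b(x)$ concatenated with $\hat a\hat b(\gamma)$ is a closed loop winding once around $0$; the winding of $V$ along it equals twice the winding along $\gamma$, which is an integer because $V$ agrees at the endpoints of $\gamma$. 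Hence the index is even. Your route, once repaired, trades this symmetry argument for an explicit index formula plus the half-integrality of $I(b,a)$; both are valid, but the paper's argument avoids all the orientation bookkeeping that derailed you.
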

\begin{proof}
The argument is essentially the one that we used just before definition~\ref{def.index} to show that $I(b,\tau)$ is a half-integer.
We do the computation inside a chart of $\bbR^2/a^2 \cup \{+\infty\} \simeq \bbR^2$ that sends $+\infty$ to $0$, and in which the map $\hat a$ becomes the linear rotation $(x,y) \to -(x,y)$. In this chart, the vectors from any point $x$ to $\hat b x$ and from $\hat a \hat b (x)$ to $\hat b \hat a \hat b (x) = \hat a x$ are equal. We consider some curve $\gamma$ in $\bbR^2$ joining some point $x$ to $ab(x)$, and the curve $\gamma'$ obtained as the concatenation of $\gamma$ and $ab(\gamma)$. This curve projects down to a curve in $\bbR^2/a^2$ that goes once around $+\infty$, and the index of $\hat b$ along this curve is twice an integer.
\end{proof}
Since the indices are not equal to one, we may apply the results of~\cite{LeR10}, which provides a partial description of the dynamics of $\hat b$ in terms of \emph{Reeb components}. Going back to the free action of $BS(1,-1)$ on the plane, we get the following rough description. Let $m$ be the least number of translation domains for $b$ that are needed to connect a point to its image under $a$.
There exists a unique sequence of Reeb components $(F_{i}, G_{i})_{i \in \bbZ}$ for $b$ such that, for every $i$, $(a(F_{i}), a(G_{i})) =  (F_{i+m}, G_{i+m})$ (see~\cite{LeR05} for the definition of the Reeb components in this context).  
The plane, equipped with the \emph{translation distance} that counts the number of translation domains needed to connect two points (\cite{LeR05}, section~3.1), is quasi-isometric to a tree (this fact uses  Proposition~3.4 of~\cite{LeR10}). In fact, a  planar countable simplicial tree may be naturally  constructed, on which the map $a$ acts as a hyperbolic transformation.

Conversely, let $a$ denotes a plane translation, $\hat a$ the order two map induced by $a$  on the annulus $\bbR^2/a^2$. Let $\beta$ be any fixed point free homeomorphism of the annulus that preserves the orientation and anti-commutes with $\hat a$. Then $\beta$ extends to a homeomorphism of the $2$-sphere that fixes the ends $\pm \infty$. Assume that the index of $+\infty$ is not equal to one\footnote{The author does not now if this is a consequence of the other hypotheses.}.
According to~\cite{LeR04} (section 4.1.e), there exists a unique homeomorphism $b$ of $\bbR^2$, which is a lift of $\beta$, and such that the index $I(b,a^2)$ is not zero; $b$ is called the \emph{canonical lift} of $\beta$.
The relation $\hat a \beta \hat a = \beta^{-1}$ implies that $aba^{-1}$ is a lift of $\beta^{-1}$ ; since $I(aba^{-1},a^2) = I(b, a^2)$ is not zero, this lift is the canonical lift of $\beta^{-1}$, and thus it coincides with $b^{-1}$. In other words, $b$ anti-commutes with $a$, and the homeomorphisms $a,b$ induce a free action of $BS(1,-1)$ on the plane. The image of this action is equal to the group of all the lifts of all the powers of $\beta$.
It is not difficult to construct uncountably many such actions, and in particular to see that every planar countable simplicial  tree, equipped with a hyperbolic map, is realized by a free action of $BS(1,-1)$ on the plane, in the sense of the previous paragraph.

\section{Preliminaries for the proof of Theorem~\ref{theo}}
\label{sec.preliminaires}
\subsection{Basic algebra of $BS(1,-1)$}
Here are some basic facts about the group $BS(1,-1) = < a,b \mid aba^{-1} = b^{-1}>$.
The index $2$ subgroup $<a^2,b>$ is isomorphic to $\bbZ^2$. The index $2$ subgroup $<a,b^2>$ is isomorphic to $BS(1,-1)$.
For any action of $BS(1,-1)$, $a$ preserves the fixed point set of $b$.
\begin{lemm}
\label{lem.normal-form}
Every element of $BS(1,-1)$ is equal to a unique element $a^p b^q$, with $p,q \in \bbZ$.
\end{lemm}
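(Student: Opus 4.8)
The plan is to prove the existence of the expression $a^pb^q$ by a rewriting argument on words, and its uniqueness by mapping $BS(1,-1)$ onto a concrete model group in which the expressions $a^pb^q$ are visibly distinct. (This is of course the standard normal form for an HNN/semidirect situation, but for $BS(1,-1)$ everything can be made completely elementary.)

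For existence, I would first record the four consequences of the relation $aba^{-1}=b^{-1}$, obtained by conjugating a single generator: $ba=ab^{-1}$, $b^{-1}a=ab$, $ba^{-1}=a^{-1}b^{-1}$, and $b^{-1}a^{-1}=a^{-1}b$. Each of these lets one replace an adjacent pair of the form ``($b$-letter)($a$-letter)'' by a pair ``($a$-letter)($b$-letter)'', at the cost of changing the exponent signs, without changing the length of the word. Starting from an arbitrary word $w$ in $a^{\pm1},b^{\pm1}$, I would apply these moves repeatedly. To see that the process terminates, assign to a word the number of its \emph{inversions}, namely the number of pairs of positions $i<j$ carrying a $b^{\pm1}$ at $i$ and an $a^{\pm1}$ at $j$. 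A move swaps two adjacent letters without altering the $a$-type/$b$-type of any letter, so the inversion status of every pair not involving the two swapped positions is unchanged, the contributions coming from pairs involving exactly one of them cancel out, and the swapped pair itself ceases to be an inversion; hence the inversion count drops by exactly one at each move. After finitely many steps we reach a word with no inversion, i.e. a word $a^{\varepsilon_1}\cdots a^{\varepsilon_k}\,b^{\delta_1}\cdots b^{\delta_\ell}$; collecting exponents yields $a^pb^q$ with $p=\sum_i\varepsilon_i$, $q=\sum_j\delta_j$.

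For uniqueness, I would introduce the group $\widehat G$ whose underlying set is $\bbZ^2$, with product $(p,q)\cdot(p',q')=(p+p',\,q+(-1)^pq')$; this is the semidirect product $\bbZ\ltimes\bbZ$ for the action $p\mapsto(-1)^p$, and a direct computation gives associativity, neutral element $(0,0)$, and inverse $(p,q)^{-1}=(-p,-(-1)^pq)$. Taking $\bar a=(1,0)$ and $\bar b=(0,1)$, one checks $\bar a\bar b\bar a^{-1}=(0,-1)=\bar b^{-1}$, so by the universal property of the presentation there is a homomorphism $\phi\colon BS(1,-1)\to\widehat G$ with $\phi(a)=\bar a$ and $\phi(b)=\bar b$. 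An immediate induction gives $\phi(a^p)=(p,0)$ and $\phi(b^q)=(0,q)$, hence $\phi(a^pb^q)=(p,(-1)^pq)$. Therefore an equality $a^pb^q=a^{p'}b^{q'}$ in $BS(1,-1)$ forces $(p,(-1)^pq)=(p',(-1)^{p'}q')$, that is $p=p'$ and then $q=q'$, which is the desired uniqueness.

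I do not expect a real obstacle here. The two points needing care are: choosing a monovariant that certifies termination of the rewriting — the inversion count works precisely because swapping an adjacent pair cannot create net new inversions elsewhere; and verifying that $\widehat G$ really is a group and that $\phi$ respects the relation. As a byproduct, $\phi$ is surjective (since $(p,q)=\phi(a^pb^{(-1)^pq})$) and, by the two parts above, injective, so it is an isomorphism $BS(1,-1)\xrightarrow{\ \sim\ }\widehat G$, recovering the usual identification of $BS(1,-1)$ with the Klein bottle group.
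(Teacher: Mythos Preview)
Your proof is correct and follows essentially the same strategy as the paper: existence by rewriting (which the paper dismisses as ``easy''), and uniqueness by mapping to a concrete model where the normal forms are visibly distinct. Your semidirect product $\widehat G$ is precisely the group of covering automorphisms of the Klein bottle that the paper mentions parenthetically as an alternative to the planar action of Figure~\ref{fig.simple-action}; you have simply made that suggestion explicit and self-contained, and supplied a termination argument (the inversion count) for the rewriting step that the paper leaves implicit.
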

\begin{proof}
 Existence is easy, and the uniqueness is proved by considering some specific faithful action of $BS(1,-1)$, for instance the one described on the first figure of this paper (one could also use the action of $BS(1,-1) $ on the plane as covering automorphisms of the Klein bottle).
\end{proof}

\subsection{One-dimensional actions of $BS(1,-1)$}\label{sec.line}

Figure~\ref{fig.dimension-1} below shows the easiest non trivial action of $BS(1,-1)$ on the line. The generators are obtained as lifts of the action of the matrices $A,B$ on the projective line $P^1(\bbR) \simeq \bbS^1$.

\bigskip
\begin{figure}[htbp]
\begin{center}
\includegraphics{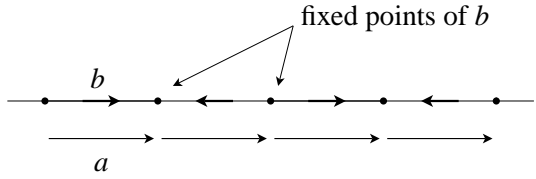}
\caption{The simplest one-dimensional action}
\label{fig.dimension-1}
\end{center}
\end{figure}

\begin{lemm}[actions on the circle]
\label{lem.dimension-one}
Let $a,b$ be two orientation preserving circle homeomorphisms such that $aba^{-1} = b^{-1}$. Assume both $a$ and $b$ have fixed points.
 Then $\fix(a)$ is strictly included in $\fix (b)$. More precisely,
every component of $\bbS^1 \setminus \fix(a)$ contains some points that are fixed by $b$.
\end{lemm}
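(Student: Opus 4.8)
The plan is to reduce everything to one elementary observation about orientation-preserving homeomorphisms of the line. If $h\colon\bbR\to\bbR$ is orientation-preserving and fixed-point-free, then by the intermediate value theorem either $h(t)>t$ for all $t$ or $h(t)<t$ for all $t$; and if $\phi$ is any orientation-preserving homeomorphism of $\bbR$, then $\phi h\phi^{-1}$ moves points in the same direction as $h$ (indeed $\phi h\phi^{-1}(s)>s$ is equivalent to $h(\phi^{-1}(s))>\phi^{-1}(s)$). In particular $\phi h\phi^{-1}\neq h^{-1}$ as soon as $h\neq\mathrm{id}$. The relation $aba^{-1}=b^{-1}$ will be forced to contradict this on a suitable invariant arc.

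First I would show $\fix(a)\subseteq\fix(b)$. Since $a\,\fix(b)=\fix(aba^{-1})=\fix(b^{-1})=\fix(b)$, the map $a$ permutes the connected components of $\bbS^1\setminus\fix(b)$; each such component is moreover preserved by $b$, which fixes $\fix(b)$ pointwise. Suppose $a$ fixes a point $x$ lying in a component $J$ of $\bbS^1\setminus\fix(b)$. Then $a(J)$ is a component containing $a(x)=x$, so $a(J)=J$, and $a$ permutes the endpoint(s) $c^{\pm}$ of $J$ (these lie in $\fix(b)$; one has $c^{+}=c^{-}$ exactly when $\fix(b)$ is a single point). If $a$ exchanged two distinct endpoints it would map the proper closed arc $\overline{J}$ onto itself reversing its endpoints, which is impossible for a homeomorphism preserving the cyclic order of $\bbS^1$. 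Hence $a$ fixes each endpoint of $J$, so $a|_J$ is an orientation-preserving homeomorphism of $J\cong\bbR$; as $J$ is invariant under both $a$ and $b$, the relation restricts to $(a|_J)(b|_J)(a|_J)^{-1}=(b|_J)^{-1}$. But $b|_J$ is orientation-preserving, fixed-point-free and non-trivial (because $J\neq\emptyset$), so this contradicts the observation above. Therefore $a$ has no fixed point outside $\fix(b)$.

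Then I would prove the finer statement, which in turn yields the strict inclusion. Let $I$ be a component of $\bbS^1\setminus\fix(a)$ and assume, for a contradiction, that $I\cap\fix(b)=\emptyset$. By the first step $\fix(a)\subseteq\fix(b)$, so $b$ fixes the endpoint(s) of $I$, whence $b(I)=I$; also $a(I)=I$ and $a|_I$ is orientation-preserving. Now $a|_I$ and $b|_I$ are orientation-preserving homeomorphisms of $I\cong\bbR$, both fixed-point-free, with $b|_I\neq\mathrm{id}$, and $(a|_I)(b|_I)(a|_I)^{-1}=(b|_I)^{-1}$, contradicting the observation once more. Hence every component of $\bbS^1\setminus\fix(a)$ meets $\fix(b)$; since it avoids $\fix(a)$, we conclude $\fix(a)\subsetneq\fix(b)$. (If $a=\mathrm{id}$ there is no such component; then $b=b^{-1}$, and since an orientation-preserving homeomorphism of $\bbR$ equal to its own inverse is the identity, $b=\mathrm{id}$ too, so this trivial case, in which $\fix(a)=\fix(b)=\bbS^1$, must be tacitly excluded.)

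I expect the only point requiring some care to be the bookkeeping with the endpoints of $J$ in the first step, and in particular the case where $\fix(b)$ is a single point: then $J$ is a once-punctured circle whose ``two endpoints'' coincide, the endpoint-exchange alternative becomes vacuous, and one lands directly in the conjugation argument --- which still applies because $a$, fixing the puncture, restricts to an orientation-preserving homeomorphism of $J\cong\bbR$. Everything else rests on the one-line observation and on the standard fact that an orientation-preserving circle homeomorphism which preserves an arc must fix both of its endpoints.
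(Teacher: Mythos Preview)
Your proof is correct and rests on the same key observation as the paper's: on an invariant open interval an orientation-preserving conjugation cannot turn a fixed-point-free homeomorphism into its inverse, since conjugation preserves the direction of displacement. The organization differs slightly. The paper first manufactures a common fixed point $\infty$ by observing that the $a$-orbit of any point of $\fix(b)$ stays in $\fix(b)$ and accumulates on $\fix(a)$; it then works once and for all on the line $\bbS^1\setminus\{\infty\}$, proving $\fix(a)\subset\fix(b)$ by a direct inequality computation at a putative $x\in\fix(a)\setminus\fix(b)$ (which is exactly your direction argument unpacked at a single point). You instead bypass the limiting step by working directly on the component $J$ of $\bbS^1\setminus\fix(b)$ containing such an $x$, using the elementary fact that an orientation-preserving circle map preserving a proper arc must fix both endpoints. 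Your route is marginally more self-contained; the paper's is marginally quicker once the common fixed point is in hand. For the second part (fixed points of $b$ in every component of $\bbS^1\setminus\fix(a)$) the two arguments are identical.
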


\begin{proof}
We first find a common fixed point.
Let $y$ be some point of the circle that is fixed by $b$. Then every point $a^n y$ is also fixed by $b$. If $a$ has some fixed point, then this sequence converges to a fixed point of $a$ which is also a fixed point of $b$.

We denote the common fixed point by $\infty$, and we will use that $a$ and $b$ preserves the order on $\bbS^1 \setminus\{\infty\}$ (that is, we consider $a$ and $b$ as homeomorphisms of the line).
Let $x$ be a fixed point for $a$, we argue by contradiction to prove that it is also a fixed point for $b$.
Assume $b(x) <x$. Then $x < b^{-1} x$ since $b$ preserves the orientation. But $a^{-1}x = x$, thus $ b a^{-1}x =  b x < x$ and
$ a b a^{-1}x  < a(x)=x$. This contradicts $a b a^{-1} = b^{-1}$.

Now let $\Delta$ be a component of $\bbS^1 \setminus \fix(a)$. Clearly it is homeomorphic to the line, and invariant under $a$ and $b$. If $b$ has no fixed point on $\Delta$, then $b$ and $ab^{-1}a^{-1}$ pushes points in opposite direction, which is impossible because they are equal. Thus $b$ has some fixed point in $\Delta$ (and actually it must have infinitely many).
\end{proof}

As a corollary of this Lemma, we see that the crystallographic group $G_{1}$ defined in Corollary~\ref{coro.crystallographic} may not act by orientation preserving homeomorphisms of the circle with the generators $a,b$ having rotation number $0$.
Here is a construction of an action where $b$ has rotation number $\frac{1}{2}$. We see $\bbS^1$ as the union of two copies of the closed  interval $[-\infty, +\infty]$ with the two copies of $-\infty$ identified, as well as the two copies of $+\infty$ (we orient the first copy of our interval positively and the second copy negatively). The map $a$ is the translation $x \mapsto x+1$ on each copy of $[-\infty, +\infty]$.
Let $R$ be the map sending the point $x$ of each copy to the point $-x$ in the other copy; this is an order two orientation preserving homeomorphism. Let $b'$ be some orientation preserving homeomorphism of $[-\infty, +\infty]$ such that $ab'a^{-1} = b'^{-1}$, as the '$b$' in Figure~\ref{fig.dimension-1}, and that commutes with the map $x \mapsto -x$. We define $b$ as follows: on the first copy of $[-\infty, +\infty]$ it coincides with $R$, on the second copy it is equal to $b' R$. Then the relations $ab^2 a^{-1}= b^{-2}$ and  
$b a^2 b^{-1}= a^{-2}$ may be easily checked, thus a circle action of the group $G_{1}$ is defined.
The faithfulness of this action may be checked first in restriction to the index four abelian subgroup generated by $a^2,b^2,(ab)^2$; given the absence of torsion element, this entails the faithfulness of the whole action.

\subsection{Limit sets of Brouwer homeomorphisms}
\label{subsec.singular}
The main tool in the proof of Theorem~\ref{theo} will be the singular set of a Brouwer homeomorphism, (probably introduced for the first time in~\cite{HomTer53}).
For the proofs we refer the reader to the section~5 in~\cite{LeR05} (which may be read independently of the remaining of~\cite{LeR05}).

In this section we consider a single Brouwer homeomorphism $a$. A set $E$ is said to be \emph{free} if $a(E) \cap E = \emptyset$.
For every free topological closed disk $D$, the sequence $(a^n(D))_{n \geq 0}$ is made of pairwise disjoint sets.
Furthermore, it converges in the space of compact subsets of the sphere $\bbR^2 \cup \{\infty\}$, equipped with the Hausdorff topology. The limit, or more precisely its intersection with the plane, is called the \emph{positive limit set} of $D$ and denoted by $\lim^+ D$. It is disjoint from $D$ and from all its iterates $a^n(D)$.  For every point $x$, we define the \emph{positive limit set} of $x$ as
$$
 \lim{}^+ x = \bigcap\{ \lim{}^+ D, x \in \inte(D)\}.
$$
 The set $\lim^-x$ is defined as the positive limit set of $x$  for the homeomorphism $a^{-1}$. We will use the notation $\lim_{a}^+$ when we need to emphasize that $a$ is the homeomorphism that is used.
This construction has the following properties.

\begin{enumerate}
\item We have  $y \in \lim^+x \Leftrightarrow x \in \lim^-y$ and this holds if and only  if there exists a sequence $(z_{n})_{ n \geq 0}$ of points converging to $x$ such that the sequence 
$(a^n z_{n})_{ n \geq 0}$ converges to $y$ (then we say that the couple $(x,y)$ is \emph{singular} for $a$).

\item  The map $a$ is conjugate to a translation if and only if it admits no singular couple.

\item For every $x$, $\lim_{a^2}^+x = \lim_{a}^+x$.

\item The sets $\lim^+x$ and $\lim^-x$ are disjoint and do not contain $x$. The sets  $(\lim^+x )\cup \{\infty\}$ and $(\lim^-x ) \cup \{\infty\}$ are compact connected subsets of $\bbR^2 \cup\{\infty\}$. They are invariant under $a$.

\item These are conjugacy invariants: for every homeomorphism $\Phi$ of the plane, 
$\lim_{\Phi a \Phi^{-1}}^\pm \Phi(x) = \Phi(\lim_{a}^\pm x)$.
\end{enumerate}

If $k$ is a free connected compact subset of the plane, then again the sets in the sequence $(a^n(k))_{n \geq 0}$ are pairwise disjoint, and the sequence converges. The proof is similar to the proof of convergence for disks (see Lemme~5.1 in~\cite{LeR05}), and left to the reader.
Thus we may define\footnote{Note that $\lim_{a}^+x$ is in general not equal to $\lim_{a}^+\{x\}$, since the latter is always empty.}
 the limit set $\lim^+(k)$. 
 The sets $\lim^-(k)$ and $\lim^+(k)$ are disjoint and disjoint from $k$.

We will denote by $V_{a}^+(x)$  the connected component of $\bbR^2 \setminus \lim_{a}^+(x)$ that contains $x$. It is invariant under $a$ (see Lemme~5.8 in~\cite{LeR05}).
Likewise, for any free connected compact set $k$, the connected component $V_{a}^+(k)$ of $\bbR^2 \setminus \lim_{a}^+(k)$ that contains $k$ is invariant under $a$ (the proof is similar to the proof of Lemme~5.4 in~\cite{LeR05}). We denote $V_{a}^+(k)$ by $V^+(k)$ when there is only one Brouwer homeomorphism
 under consideration. 
  The following result generalizes Proposition 5.5 in~\cite{LeR05}, where $k^-$ was a closed disk and $k^+$ was an arc.
\begin{prop}
\label{prop.generalise55}
Let $k^-, k^+$ be two compact connected free sets. Assume that $k^+$  meets both $\lim^+k^-$ and $V^+(k^-)$.
Then there exists $n_{0}>0$ such that for every $n \geq n_{0}$, $a^n k^- \cap k^+ \neq \emptyset$.
\end{prop}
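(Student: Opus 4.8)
The statement generalizes Proposition 5.5 of~\cite{LeR05} from the case "$k^-$ a disk, $k^+$ an arc" to the case of two arbitrary compact connected free sets. The natural strategy is to reduce to the already-known proposition by enlarging or thickening $k^-$ and $k^+$ into a disk and an arc while controlling the limit sets. First I would fix a point $y \in k^+ \cap \lim^+ k^-$ and a point $x \in k^+ \cap V^+(k^-)$. By definition of $\lim^+ k^-$ as the Hausdorff limit of the pairwise disjoint sequence $(a^n k^-)_{n\ge 0}$, there is a sequence of integers $n_j \to \infty$ and points $y_j \in a^{n_j} k^-$ with $y_j \to y$. The goal is to show that, for all large $n$, $a^n k^-$ actually meets $k^+$, not merely accumulates on it.

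**Key steps.** The first step is to replace $k^+$ by a slightly larger free compact connected set, in fact by a free topological arc $\alpha$ joining $x$ to $y$ and contained in a small neighbourhood of $k^+$: since $k^+$ is connected and free, and freeness is an open condition on compact sets, a sufficiently close connected compact neighbourhood of $k^+$ is still free, and inside it one can route a simple arc from $x$ to $y$ (freeness of the arc being again an open/generic condition, or arranged directly as in the proof of Lemma~\ref{lem.index}, by lifting an arc from the quotient $\bbR^2/a$ — except here $a$ is not assumed to commute with a translation, so instead I would use that $a$ has no periodic points and a standard transversality/general-position argument to make the arc free). The arc $\alpha$ still has one endpoint $y$ in $\lim^+ k^-$ and one endpoint $x$ in $V^+(k^-)$, because both conditions are preserved under small perturbation once we know $x \notin \lim^+ k^-$ (which holds since $V^+(k^-)$ is an open set disjoint from $\lim^+ k^-$). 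The second step is to thicken $k^-$ into a free closed topological disk $D \supseteq k^-$: because $k^-$ is compact connected and free, a small closed regular neighbourhood $D$ of $k^-$ is a free closed disk, and by continuity of the limit-set construction under such thickening (monotonicity: $\lim^+ k^- \subseteq \lim^+ D$, and one can take $D$ close enough that $\lim^+ D$ stays near $\lim^+ k^-$, while $V^+(D) \supseteq V^+(k^-)\setminus(\text{thin collar})$ still contains $x$). The third step is simply to invoke the original Proposition 5.5 of~\cite{LeR05} with the disk $D$ in place of $k^-$ and the arc $\alpha$ in place of $k^+$: it yields $n_0$ with $a^n D \cap \alpha \neq \emptyset$ for all $n \ge n_0$. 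The final step is to upgrade this to $a^n k^- \cap k^+ \neq \emptyset$. This does not follow formally, so instead I would apply Proposition 5.5 in a limiting fashion: choose a decreasing sequence of such thickenings $D_m \downarrow k^-$ and arcs $\alpha_m$ shrinking onto $k^+$, get points $p_m \in a^{n(m)} D_m \cap \alpha_m$ with $n(m)\to\infty$, and extract; but to get a clean "for all large $n$" conclusion rather than merely "infinitely often", I would instead keep $D$ and $\alpha$ fixed, note that Proposition 5.5 already gives $a^nD\cap\alpha\ne\emptyset$ for every $n\ge n_0$, and then run the thickening argument the other way: observe that the conclusion of Proposition~\ref{prop.generalise55} is really a statement about $\lim^+ k^-$ and $V^+(k^-)$, so once we know $k^+$ meets both, we may replace $k^+$ by the single arc $\alpha$ (whose limit-set relationship to $k^-$ is inherited) and $k^-$ by any disk $D$ with $\lim^+ D$ and $V^+(D)$ still meeting $\alpha$ — for which the original proposition applies directly and gives the bound for $a^nD$, hence \emph{a fortiori} for $a^n$ of the smaller set... which is the wrong direction.

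**The main obstacle.** The genuine difficulty, and where I would spend the real effort, is exactly this last point: the original Proposition 5.5 controls the \emph{larger} disk $D$, but we want the conclusion for the \emph{smaller} set $k^-$, and $a^n D \cap \alpha \ne \emptyset$ does not imply $a^n k^- \cap k^+ \ne\emptyset$. The right fix is not to thicken $k^-$ but to revisit the proof of Proposition 5.5 itself and observe that it only uses that $k^-$ is compact, connected and free (the disk structure was a convenience), and that $k^+$ is compact, connected, free and meets both $\lim^+ k^-$ and the component $V^+(k^-)$ — i.e. the proof goes through verbatim once Lemme~5.1, Lemme~5.4 and Lemme~5.8 of~\cite{LeR05} are available in the stated generality for compact connected free sets, and the text has already asserted (just above) that these generalizations hold with proofs "similar" and "left to the reader". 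So the honest proof is: cite the generalized versions of Lemmes 5.1, 5.4, 5.8, then reproduce the argument of Proposition 5.5 of~\cite{LeR05} mutatis mutandis, the only new ingredient being the replacement of "disk" by "compact connected free set" throughout, which the earlier lemmas now support. I would present the proof in that form, flagging that the sole subtlety is checking that no step of the original argument used the Jordan-domain structure of $D$ in an essential way — only its being a free continuum with a well-defined limit set and complementary component $V^+$.
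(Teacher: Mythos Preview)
Your proposal is not a proof; it is an exploration that correctly abandons one approach and then defers the actual work. The thickening idea is indeed wrong for exactly the reason you give: enlarging $k^-$ to a free disk $D$ gives control over $a^nD$, which is the wrong direction. Your fallback --- ``reproduce the argument of Proposition~5.5 of~\cite{LeR05} mutatis mutandis'' --- is in spirit what the paper does, but you neither execute it nor identify where the generalization costs something. Saying ``the sole subtlety is checking that no step used the Jordan-domain structure of $D$'' is not a proof; it is a promise.

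The paper's proof has two concrete pieces that your plan does not anticipate. First, an \emph{easy case}: if $k^+$ already meets some iterate $a^{n_0}k^-$, one shows it meets every later iterate. This is done via Franks's lemma (the set of times $n$ with $a^nD_1\cap D_2\neq\emptyset$ is an interval of $\bbZ$), applied to free open disks $D^-\supset k^-$ and $D^+\supset k^+$. To produce such disks with $a^{n_1}D^-\cap D^+=\emptyset$ one must know that neither of $k^+$, $a^{n_1}k^-$ sits in a bounded complementary component of the other; this is precisely where the absence of a Jordan-domain hypothesis bites, and the paper handles it with the $\mathrm{Full}$ construction introduced just before the proposition (using $\lim^+\mathrm{Full}(k^+)=\lim^+k^+$). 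Second, the \emph{general case} is reduced to the easy one by a good/bad dichotomy for ``neighbour'' points of $V^+(k^-)$ lying in a single free Euclidean disk containing $k^+$: either such a point can be joined inside the disk to an iterate of $k^-$ while staying off $\lim^+k^-$ (good), or it can be joined to $\lim^+k^-$ while avoiding all iterates (bad); these are open and mutually exclusive, and a boundary/connectedness argument (the analogue of Affirmation~5.10 in~\cite{LeR05}) forces all neighbours to be good, hence $k^+$ meets some iterate.

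None of these ingredients --- Franks's lemma, the $\mathrm{Full}$ trick, the good/bad partition --- appears in your plan. If you want to claim the original proof ``goes through'', you have to walk through it and supply exactly these points; the paper does so rather than asserting it.
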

 
The following construction will be useful for the proof. Denote by $\mathrm{Full}(k)$ the union of $k$ and of all the bounded connected components of $\bbR^2 \setminus k$. If $k$ is compact and connected then 
$\mathrm{Full}(k)$ is compact and connected. It coincides with the intersection of all topological disks containing $k$, and there exists a decreasing sequence of topological disks whose intersection is equal to $\mathrm{Full}(k)$.
 If in addition $k$ is free then $\mathrm{Full}(k)$ is free, essentially because all the orbits of $a$ tends to infinity, which prevents $a(k)$ to be included in $\mathrm{Full}(k)$.
Thus the set $\lim^+ \mathrm{Full}(k)$ is also defined. Furthermore, it is easy to see that it is equal to $\lim^+ k$.

%

\begin{proof}
We generalize the proof of~\cite{LeR05}. We will need the following simple consequence of Franks's lemma: \emph{whenever $D_{1}, D_{2}$ are two free disks, the set of times $n$ such that $a^n(D_{1})$ meets $D_{2}$ is an interval of $\bbZ$} (see Lemme~7 in~\cite{LeR99}).

Let $k^-, k^+$ be as in the proposition. We begin with an easy case, namely assuming that $k^+$ meets  some iterate $a^{n_{0}}(k^-)$. In this case, let us prove that $k^+$ must also meet all the iterates $a^n (k^-)$ with $n \geq n_{0}$. Assume on the contrary that $a^{n_{1}} (k^-) \cap k^+ = \emptyset$ for some $n_{1}> n_{0}$. 
First note that $k^+$ is included in the unbounded connected component of $\bbR^2 \setminus a^{n_{1}}( k^-)$. Indeed it is disjoint from $a^{n_{1}} (k^-)$ and meets the set $\lim_{}^+k^-$, which is disjoint from $a^{n_{1}} (k^-)$ and unbounded. Likewise $ a^{n_{1}} (k^-)$  is included in the unbounded connected component of $\bbR^2 \setminus k^+$. For otherwise we would have $\lim_{}^+k^- \subset \lim_{}^+\mathrm{Full}(k^+) =  \lim_{}^+ k^+$ and the set $k^+$ would meet $\lim_{}^+k^+$, which cannot happen.

Thus there exist some free open disks $D^-, D^+$ containing respectively $k^-, k^+$  and such that $a^{n_{1}} (D^-) \cap D^+ = \emptyset$. Since $k^+$ meets $\lim_{}^+k^-$ there exists $n_{2}> n_{1}$ such that $a^{n_{2}} (D^-) \cap D^+ \neq \emptyset$. 
In this situation $D^+$ meets $a^{n_{0}}(D^-)$ and $a^{n_{2}}(D^-)$ but not $a^{n_{1}}(D^-)$, which contradicts Franks's lemma.

We now face the general case. Again we consider some free open disk  $D^+$ containing $k^+$. According to Schoenflies theorem, there exists a homeomorphism of the plane that sends $D^+$ to a euclidean disk $B_{x_{0}}$ whose center we denote by $x_{0}$. We may further require that some point of $k^+ \cap V_{}^+(k^-)$ is sent to $x_{0}$. Conjugating $a$ by this change of coordinate, we may assume that $B_{x_{0}}$ is a free euclidean open disk containing $k^+$ whose center $x_{0}$ is a point of $k^+ \cap V_{}^+(k^-)$.

Consider a point $x$ of $V_{}^+(k^-)$. We say that $x$ is a \emph{neighbour} if there exists a euclidean free open disk $B_{x}$ (called a \emph{neighbour disk}), centered at $x$, meeting  $\lim_{}^+k^-$. The set of neighbours is open and contains the point $x_{0}$. Assume $x$ is a neighbour. Then exactly one of the following holds:

\begin{enumerate}
\item There exists a compact connected 
 set $k^g_{x}$, included in a neighbour disk $B_{x}$ and disjoint from $\lim_{}^+ k^-$, containing $x$ and meeting some iterate of $k^-$ (then the point $x$ will be called \emph{good}; note that in this case $k^g_{x}$ meets only a finite number of iterates of $k^-$).
\item There exists a compact connected 
 set $k^b_{x}$, included in a neighbour disk  $B_{x}$ and meeting $\lim_{}^+ k^-$, containing $x$ and disjoint from all the iterates of $k^-$ (then the point $x$ will be called \emph{bad}).
\end{enumerate}
Indeed we may choose a segment $\gamma$ inside  a neighbour disk $B_{x}$ with one end-point equal to  $x$ and the other on $\lim_{}^+ k^-$, and otherwise disjoint from 
$\lim_{}^+ k^-$. If $\gamma$ does not meet any iterate of $k^-$ we are in the bad case, otherwise we are in the good case. Now assume both cases occur simultaneously.
Then $k'_{2} = k^g_{x} \cup k^b_{x}$ is a compact connected free set meeting $\lim_{}^+ k^-$ and some iterate of $D^+$, but not infinitely many of them, in contradiction to the easy case.

 We will actually prove that \emph{every neighbour point is good}. From this we deduce that the point $x_{0}$ is good, thus $k^+$, being included in $B_{x_{0}}$ and meeting the set $\lim^+ k^-$,  must meet some iterate of $k^-$. Then we are back to the easy case of the proposition, and the proof is complete.

Let us prove that every neighbour point is good. It is easy to see that the set of good points is open. Since both types are exclusive, a bad point has a neighbourhood which does not meet any iterate of $k^-$, and from this we see that the  set of bad points is also open. Denote by $N$ the set of neighbour points, and let $N'$ be some connected component of $N$. It is enough to prove that $N'$ contains some good points.
Let $z$ be any point which is simultaneously on the boundary of $N'$ and on the boundary of $V_{}^+(k^-)$. The situation is now similar to that of  Affirmation 5.10 of~\cite{LeR05}, and the same proof shows that $N' \cup\{z\}$ is a neighbourhood of $z$ in the space $V^+(k^-) \cup \{z\}$ (the key argument here is Alexander's lemma, see~\cite{LeR05}). Being on $\partial V^+(k^-) \subset \lim^+ k^-$, the point $z$ is accumulated by iterates $a^n(k^-)$, which are all included in $ V_{}^+(k^-)$ since this last set is invariant under $a$. In particular $N'$ meets some of these iterates. Thus it contains some good points.
\end{proof}


\paragraph{Homeomorphisms of the disk}
We will need to adapt the above definitions to a slightly different setting. Let $a$ be an orientation preserving homeomorphism of the closed two-disk $\bbD^2$, which has no fixed point in the interior. The interior of $\bbD^2$ is homeomorphic to the plane, thus the above definitions make sense for $a_{\mid \inte(\bbD^2)}$, but we also want to consider singular couples $(x,y)$ where $x$ or $y$ (or both) belongs to the boundary, and to define the limit set of a non fixed boundary point. We proceed as follows. We identify $\bbD^2$ with the unit disk in the plane. Let $h$ be the circle homeomorphism such that, in polar coordinates,  $a(1,\theta) = (1, h(\theta))$ for every $\theta \in \bbS^1$. Extend $a$ to a homeomorphism $\bar a$ of the plane by setting $a(r, \theta) = (r, h(\theta))$ for every $\theta \in \bbS^1$ and $r>1$. Let $O$ be the complement of the fixed points set of $\bar a$ in the plane. It is easy to check that $O$ is simply connected. Thus we may identify $O$ with the plane, and consider $a' = \bar a_{\mid O}$ as a Brouwer homeomorphism.
In particular the sets $\lim_{a'}^+ x$, $\lim_{a'}^+ k$ are defined for every point $x$ and every free compact connected set $k$ in $O$.
Now we set
$$\lim{}_{a}^+ x = \lim{}_{a'}^+ x, \ \ \ \lim{}_{a}^+ k = \lim{}_{a'}^+ k$$
for any non fixed point $x$ or free compact connected set $k$ in $\bbD^2$. It is easy to see that these are closed subsets of $\bbD^2 \setminus \mathrm{Fix}(a)$.
We have the following characterizations for every point $y \in \bbD^2$ which is not fixed by $a$:
\begin{itemize}
\item $y \in \lim_{a}^+ x$ if and only if there exists a sequence $(z_{n})_{ n \geq 0}$ of points of $\bbD^2$ converging to $x$ such that the sequence 
$(a^n z_{n})_{ n \geq 0}$ converges to $y$.
\item $y \in \lim_{a}^+ k$ if and only if there exists a sequence $(z_{n})_{ n \geq 0}$ of points of $k$ such that the sequence 
$(a^n z_{n})_{ n \geq 0}$ converges to $y$.
\end{itemize}
We also define the set $V_{a}^+(k) = V_{a'}^+ (k) \cap \bbD^2$; the negative limit sets are defined similarly.

\subsection{Prime ends for limit sets of Brouwer homeomorphisms}

We briefly review the theory of prime ends compactification, see~\cite{Mat82} for details. Let $U$ be a simply connected open set in the plane which is not equal to the whole plane. 
The space  $\hat U$ of prime points of $U$ is a topological space, naturally associated to $U$, with the following properties.  (1) $U$ identifies with a subset of $\hat U$; the points in $\partial \hat U = \hat U \setminus U$ are the \emph{prime ends} of $U$. (2) The pair $(U, \hat U)$ is homeomorphic to the pair $(\inte (\bbD^2), \bbD^2)$.
(3) Any homeomorphism $a : \clos(U) \to \clos(U)$ extends to a unique homeomorphism $\hat a : \hat U \to \hat U$. This extension process is natural, in particular the map $a \mapsto \hat a$ is a group homomorphism. This last point is of course especially crucial for the study of group actions.

Remember (\cite{Mat82}) that an \emph{end-path} in $U$ is a continuous mapping $\gamma : (0,1] \to U$ such that, when $t$ tends to $0$, $\gamma(t)$ converges in $\clos(U)$ to a point of $\partial U$, which is called the limit point of $\gamma$ and denoted by $\lim_{\clos(U)}\gamma$. If $\gamma$ is an end-path then $\lim_{\hat U}\gamma$ is also a single prime end (\cite{Mat82}, Lemma 14.1). Such a prime end is called \emph{accessible}. The set of accessible prime ends is dense in $\partial \hat U$ (\cite{Mat82}, Theorem 17.2).

According to Lemma~14.1 in~\cite{Mat82}, two end-pathes having distinct limit points in $\partial U$ also have distinct limit prime ends in $\partial \hat U$. This entails immediatly the following Lemma.
\begin{lemm}
\label{lemm-fixed accessible prime ends}
Let $a$ be a Brouwer homeomorphism, and $U$ be a non empty simply connected  open set such that $a(U) = U$. 
Assume that $U$ is  not equal to the whole plane.
Then no accessible prime end in $\partial \hat U$ is fixed by $\hat a$. In particular,   the set of fixed points of $\hat a$ is nowhere dense in $\partial \hat U$.
\end{lemm}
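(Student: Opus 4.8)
The plan is to prove the first assertion by contradiction, exploiting the naturality of the prime ends compactification (property (3) above) together with the injectivity statement of Mather's Lemma~14.1 quoted just before the lemma; the second assertion (``in particular'') will then follow formally from the density of accessible prime ends. So suppose, for contradiction, that some accessible prime end $\xi \in \partial \hat U$ is fixed by $\hat a$. Since $\xi$ is accessible, there is an end-path $\gamma : (0,1] \to U$ with $\lim_{\hat U}\gamma = \xi$; let $z = \lim_{\clos(U)}\gamma \in \partial U$ denote its limit point in the plane. Because $a$ is a homeomorphism of the plane with $a(U) = U$, it restricts to a homeomorphism of $\clos(U)$, so $a \circ \gamma$ is again an end-path in $U$, and its limit point in $\partial U$ is $a(z)$.

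The key step is then to compare the two end-paths $\gamma$ and $a\circ\gamma$. By the naturality of the extension (property (3)), $\lim_{\hat U}(a\circ\gamma) = \hat a\bigl(\lim_{\hat U}\gamma\bigr) = \hat a(\xi) = \xi$, so the two end-paths have the \emph{same} limit prime end. By the contrapositive of Mather's Lemma~14.1 (two end-paths with distinct limit points in $\partial U$ have distinct limit prime ends), they must then have the same limit point in $\partial U$, that is $a(z) = z$. This contradicts the hypothesis that $a$ is a Brouwer homeomorphism, hence fixed point free. Therefore no accessible prime end of $U$ is fixed by $\hat a$.

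For the final sentence: since $a$ is fixed point free in the plane, it has no fixed point in $U$ either, so $\fix(\hat a) \subseteq \partial \hat U$; by the previous paragraph $\fix(\hat a)$ contains only non-accessible prime ends. As the set of accessible prime ends is dense in $\partial \hat U$ (Mather's Theorem~17.2), $\fix(\hat a)$ has empty interior in $\partial \hat U$; being also closed (as the fixed point set of the continuous map $\hat a$), it is nowhere dense there. I do not expect a real obstacle in this argument: it is essentially a formal consequence of the quoted properties of the prime ends functor. The only points requiring a little care are checking that the hypothesis $U \neq \bbR^2$ (needed for the prime ends compactification to be defined) and the invariance $a(U)=U$ are genuinely used, and keeping a clean distinction between the limit point of an end-path in $\partial U \subset \bbR^2$ and its limit prime end in $\partial \hat U$.
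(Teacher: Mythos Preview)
Your proof is correct and follows exactly the line the paper indicates: the paper simply states that Mather's Lemma~14.1 ``entails immediately'' the result, and you have spelled out precisely this immediate consequence---comparing the end-paths $\gamma$ and $a\circ\gamma$, using naturality of $\hat a$ to see they land on the same prime end, and invoking the contrapositive of Lemma~14.1 to force $a(z)=z$. Your treatment of the ``in particular'' clause via density of accessible prime ends and closedness of the fixed point set is also exactly what is intended.
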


We consider a Brouwer homeomorphism $a$. The following lemma is essentially another formulation of Proposition~\ref{prop.generalise55}.
\begin{lemm}
\label{lem.limit-prime-ends}
Let $z$ be a point such that $\lim_{a}^+z$ is non-empty. Let $U = V^+ z$ be the connected component of $\bbR^2 \setminus \lim^+z$ that contains $z$, and $\hat a : \hat U \to \hat U$ denotes the prime ends compactification of $a$ on $U$. Then the set  $\lim_{\hat a}^+ z$ is equal to $\partial \hat U \setminus\mathrm{Fix}(\hat a)$.

Likewise, let $k$ be a compact connected free set such that $\lim_{a}^+k$ is non-empty. Let $U = V^+ k$, and $\hat a : \hat U \to \hat U$ denotes the prime ends compactification of $a$ on $U$. Then the set  $\lim_{\hat a}^+ k$ is equal to $\partial \hat U\setminus\mathrm{Fix}(\hat a)$.
\end{lemm}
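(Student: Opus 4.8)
The plan is to translate between "accumulation of iterates $a^n(z)$ inside the plane" and "accumulation of iterates $\hat a^n(z)$ inside the prime-ends compactification $\hat U$", using the two facts recalled just above: (a) the prime-ends compactification is natural, so $\hat a$ is genuinely the extension of $a_{\mid \clos(U)}$ and $(U,\hat U)\cong(\inte\bbD^2,\bbD^2)$; and (b) by Lemma~\ref{lemm-fixed accessible prime ends}, $\mathrm{Fix}(\hat a)$ is nowhere dense in $\partial\hat U$, and in particular contains no accessible prime end. Since the two assertions (for a point $z$ and for a compact connected free set $k$) have identical proofs once one works in $O := V^+(\cdot)$ and uses the characterizations of $\lim^+$ recalled in section~\ref{subsec.singular}, I would prove the statement for $z$ and remark that the argument for $k$ is the same, replacing sequences converging to $z$ by sequences of points of $k$.

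First I would set up the framework. Apply $\hat a$ to the pair $(U,\hat U)$: since $a(U)=U$ and $U\neq\bbR^2$, the closure $\clos(U)$ is invariant, $\hat a$ is well defined, and $\hat U$ is a disk on which $\hat a$ acts as a fixed-point-free-in-the-interior homeomorphism (no fixed point in $U$ since $a$ is Brouwer). Thus the "Homeomorphisms of the disk" machinery of section~\ref{subsec.singular} applies to $\hat a$ on $\hat U$, and $\lim_{\hat a}^+z$ is a well-defined closed subset of $\partial\hat U\setminus\mathrm{Fix}(\hat a)$. This already gives the inclusion $\lim_{\hat a}^+z\subseteq \partial\hat U\setminus\mathrm{Fix}(\hat a)$ for free, so the content is the reverse inclusion.

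For the reverse inclusion, let $\hat y\in\partial\hat U\setminus\mathrm{Fix}(\hat a)$; I want a sequence $z_n\to z$ in $\hat U$ with $\hat a^n(z_n)\to\hat y$. It suffices to treat $\hat y$ accessible, since accessible prime ends are dense in $\partial\hat U$ (\cite{Mat82}, Theorem~17.2) and $\lim_{\hat a}^+z$ is closed; so pick an end-path $\gamma:(0,1]\to U$ with $\lim_{\hat U}\gamma=\hat y$, and let $y=\lim_{\clos(U)}\gamma\in\partial U\subseteq\lim_a^+z$ be its limit point in the plane. Because $y\in\lim_a^+z$, property~1 of section~\ref{subsec.singular} gives a sequence $w_n\to z$ in $\bbR^2$ with $a^n(w_n)\to y$; after shrinking slightly I may take $w_n\in U$ (since $z\in U$ open and invariant, and $a^n(w_n)\in a^n(U)=U$). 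The issue is that $w_n\to z$ in $\bbR^2$ does not a priori imply $w_n\to z$ in $\hat U$, and $a^n(w_n)\to y$ in $\clos(U)$ does not a priori imply $\hat a^n(w_n)\to\hat y$ in $\hat U$. This is the main obstacle, and it is exactly where one must be careful about prime ends: convergence to a boundary point of $U$ in the plane is weaker than convergence to a prime end. I would resolve it by instead running Proposition~\ref{prop.generalise55} directly inside the plane: choose a small free compact connected set $k^-$ containing $z$ in its interior, and a free compact connected set $k^+$ that is an initial segment $\gamma([\,\varepsilon,1])$ of the end-path together with a small disk around a point of it — arranged so that $k^+$ meets both $\lim_a^+k^-=\lim_a^+z$ (near $y$, via the end-path) and $V^+(k^-)=U$ (the end-path lies in $U$). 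Proposition~\ref{prop.generalise55} then yields $a^n(k^-)\cap k^+\neq\emptyset$ for all large $n$, i.e. points $z_n\to z$ with $a^n(z_n)$ lying on the end-path arbitrarily close to its $\hat U$-limit $\hat y$; letting the initial segment shrink and diagonalizing produces $z_n\to z$ in $U$ (hence in $\hat U$) with $\hat a^n(z_n)\to\hat y$ in $\hat U$, which is what is needed. Finally, since $\lim_{\hat a}^+z$ is closed and contains all accessible prime ends of $\partial\hat U$ outside $\mathrm{Fix}(\hat a)$, and since by Lemma~\ref{lemm-fixed accessible prime ends} every non-fixed prime end is a limit of accessible non-fixed ones, I conclude $\lim_{\hat a}^+z=\partial\hat U\setminus\mathrm{Fix}(\hat a)$. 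The same argument with $k^-$ a small free neighbourhood of the compact set $k$ (using $\mathrm{Full}$, which is free when $k$ is, and $\lim_a^+\mathrm{Full}(k)=\lim_a^+k$) gives the statement for $k$.
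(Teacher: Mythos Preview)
Your approach is essentially the paper's: show $\lim_{\hat a}^+z\subset\partial\hat U\setminus\mathrm{Fix}(\hat a)$ directly, then for the reverse inclusion reduce by density to accessible prime ends, pick an end-path $\gamma$ landing at such a prime end, and invoke Proposition~\ref{prop.generalise55} (the paper cites its special case, Proposition~5.5 of~\cite{LeR05}) to force iterates $a^n(D)$ of small disks around $z$ to keep hitting $\gamma$, which pins the prime end in $\lim_{\hat a}^+z$. The detour through property~1 of section~\ref{subsec.singular} is unnecessary, as you yourself note.

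There is one genuine slip. You assert $\lim_a^+k^-=\lim_a^+z$ and $V^+(k^-)=U$ in order to check the hypotheses of Proposition~\ref{prop.generalise55}; neither is an equality. One has $\lim_a^+z\subseteq\lim_a^+k^-$ (harmless: your $k^+$ then still meets $\lim_a^+k^-$) but only $V^+(k^-)\subseteq U$, which is the wrong direction --- the end-path lying in $U$ does \emph{not} guarantee that $k^+$ meets $V^+(k^-)$. The paper avoids this by taking the end-path with $\gamma(1)=z$: then any initial segment of $\gamma$ contains $z\in k^-\subset V^+(k^-)$, and the hypothesis is automatic. With that adjustment (and then no auxiliary disk is needed on $k^+$: the closed arc $\bar\gamma$ already reaches $y\in\lim_a^+z\subseteq\lim_a^+k^-$), your argument matches the paper's.
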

\begin{proof}
First note that the set $U$ is invariant under $a$ (\cite{LeR05}, Lemme~5.4).
By definition the set $\lim_{a}^+z$ is disjoint from $U = V_{a}^+ z$. Thus $\lim_{\hat a}^+z \subset \partial \hat U$.
For the reverse inclusion, since the accessible prime ends are dense on $\partial \hat U$ and the set $\lim_{a}^+z$ is closed in $\partial \hat U \setminus\mathrm{Fix}(\hat a)$, 
we just need to show that every accessible prime end $e \in \partial \hat U$ belongs to $\lim_{\hat a}^+(z)$.
Let  $\gamma$ be an end-path in $U$ such that $\lim_{\hat U} \gamma = e$, with $\gamma(1) = z$. According to Proposition 5.5 of~\cite{LeR05}, 
for any disk $D$ in $U$ that contains $z$ in its interior, $\gamma$ meets $a^{n}(D)$ for an infinite number of $n \geq 0$. We deduce that 
the set $\lim_{a}^+z$ meets the closure of $\gamma$ in $\hat U$; since it is disjoint from $U$, it contains $e$.

The second case follows from analogous considerations, replacing Proposition 5.5 of~\cite{LeR05} by Proposition~\ref{prop.generalise55} above.
\end{proof}

\section{Proof of Theorem~\ref{theo}}
\label{sec.proof}

Let us explain the main idea of the proof. We consider two Brouwer homeomorphisms $a,b$ such that $aba^{-1} = b^{-1}$. We assume that $a$ is not conjugate to a translation, and we look for a contradiction. By hypothesis the singular set of $a$ is non empty: there exist two points $x,y$ such that $y \in \lim_{a}^+ (x)$.
The sets $\lim_{a}^+ (x)$ and $\lim_{a}^- (y)$ are one-dimensional closed subsets of the plane. To simplify, let us assume that there exists a simply connected open set $O$ whose boundary is the union of these two limit sets. Then the set $O$ is essentially invariant under $a$ and $b$.
By considering the prime-ends compactification of $O$, we get an action of $BS(1,-1)$ on the disk; we may further symplify the action to get the following situation (figure~\ref{fig.before-after-two}, right): the action is free on the interior of the disk, $a$ has exactly two fixed points $N,S$ on the boundary, $b$ has only isolated fixed points in each component $\Delta,\Delta'$ of $\partial \bbD^2 \setminus\{N,S\}$. We now use the dynamics of $b$. A classical property of local dynamics, that goes back to Birkhoff, allows to find two compact connected sets $k,k'$ that meet respectively $\Delta,\Delta'$, and that are positively invariant under $b$. We now remember that the boundary of $O$ is made up of two mutually singular sets. This entails that almost all $a$-iterates of $k$ will meet $k'$. Consider a point in the intersection  $a^{2n}(k) \cap k'$ (figure~\ref{fig.contradiction}). The set $a^{2n}(k)$ is positively invariant under $b$, thus the $b$-iterates of this point must converge towards a point of $\Delta$, but they must also converge to a point of $\Delta'$. This is a contradiction.

Unfortunately, such a set $O$ do not always exists. But this difficulty may be overcome by using two successive prime-ends compactifications in order to get the disk action.

\bigskip

We now turn to the details of the proof. Let $a,b$ be two Brouwer homeomorphisms satisfying $aba^{-1} = b^{-1}$.

\begin{lemm}\label{lemm.invariance}
For every point $x$, the following  sets are invariant under $b^2$: $\mathrm{lim}^+_{a}(x)$, $\mathrm{lim}^-_{a}(x)$, the connected component $V_{a}^+(x)$ of $\bbR^2 \setminus\lim^+_{a} (x)$,  and the connected component $V_{a}^-(x)$ of $\bbR^2 \setminus\lim^-_{a} (x)$.
\end{lemm}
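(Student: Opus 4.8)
The plan is to exploit the conjugacy-invariance of the limit-set construction (property 5 in Section~\ref{subsec.singular}) together with the relation $aba^{-1}=b^{-1}$, noting that $b^2$ commutes with $a$ up to relabelling. First I would record the basic algebra: from $aba^{-1}=b^{-1}$ one gets $a b^2 a^{-1} = b^{-2}$, hence $b^2$ and $a$ generate the abelian group $\langle a^2, b^2\rangle$ (more precisely $a b^2 = b^{-2} a$, but since we will only use conjugation by $b^2$ this sign is harmless once we pass to the right powers). The key point is that conjugating the Brouwer homeomorphism $a$ by $b$ gives $b a b^{-1}$, and the relation says $b a b^{-1} = b(a) $; let me instead use $b^2 a b^{-2} = a$ — indeed from $aba^{-1}=b^{-1}$ we get $ba b^{-1} = ?$, so I would first carefully derive that $b^2$ commutes with $a$: writing $aba^{-1}=b^{-1}$ as $ab = b^{-1}a$, we obtain $ab^2 = b^{-1}ab = b^{-1}b^{-1}a = b^{-2}a$, and then $b^2 a = b^2 a$, while $a b^2 a^{-1} = b^{-2}$ shows $b^2$ and $a$ do \emph{not} commute; rather $a$ conjugates $b^2$ to $b^{-2}$. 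So the correct statement to use is that $b^2$ conjugates $a$ to some homeomorphism which, by applying the relation twice, equals $a$ again. Concretely: $b^2 a b^{-2}$; using $b a = a b^{-1}$ (which is $aba^{-1}=b^{-1}$ rearranged to $ba = ab^{-1}$? no: $aba^{-1}=b^{-1}$ gives $ab = b^{-1}a$, i.e. $ba = ab$ fails). The clean route: $a^{-1}b a = b^{-1}$ as well (conjugating the relation by $a^{-1}$ and inverting), so $b$ conjugates $a$ to $b a b^{-1}$, and $b a b^{-1} = b a b^{-1}$; but $a (b a b^{-1}) a^{-1} = (aba^{-1})(a a a^{-1})(a b^{-1} a^{-1}) $ is not obviously $a$. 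I will instead use the cleanest fact: since $a b^2 a^{-1} = b^{-2}$, the homeomorphism $b^2$ is conjugate (via $a$) to $b^{-2}$, and more usefully $b^2 \cdot a \cdot b^{-2}$ is a Brouwer homeomorphism whose square equals $b^2 a^2 b^{-2} = (b^2 a b^{-2})^2$; since $a b^2 a^{-1}=b^{-2}$ gives $a^2 b^2 a^{-2} = b^2$, we get $b^2 a^2 b^{-2} = a^2$, so $b^2$ commutes with $a^2$.

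So the honest plan is: \textbf{(1)} Observe $b^2$ commutes with $a^2$ (from $a b^2 a^{-1}=b^{-2}$ applied twice, equivalently $a^2 b^2 = b^2 a^2$). \textbf{(2)} Apply property~5 of the limit-set construction to the homeomorphism $\Phi = b^2$ and the Brouwer homeomorphism $a^2$: since $\Phi$ commutes with $a^2$, we get $\lim^{\pm}_{a^2}(b^2 x) = b^2(\lim^{\pm}_{a^2} x)$ for every $x$. \textbf{(3)} Invoke property~3, $\lim^{\pm}_{a^2}(x) = \lim^{\pm}_a(x)$, to rewrite this as $\lim^{\pm}_{a}(b^2 x) = b^2 (\lim^{\pm}_a x)$. \textbf{(4)} Now specialize: we want invariance of $\lim^+_a(x)$ under $b^2$, i.e. $b^2(\lim^+_a x) = \lim^+_a(x)$, not merely $=\lim^+_a(b^2 x)$. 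This is the real content, and it does \emph{not} follow from equivariance alone — it requires that $b^2$ preserves the set $\{x\}\cup\lim_a^+(x)$ in a way that pins $\lim^+_a(x)$ itself. So step~4 is where I expect the main obstacle.

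To overcome step~4, I would argue as follows. By property~4, $(\lim^+_a x)\cup\{\infty\}$ is a compact connected $a$-invariant subset of the sphere, and $\lim^+_a(x)$ is disjoint from $\lim^-_a(x)$ and from $x$. Consider the action of $b^2$. Using $\lim^+_a(b^2 x) = b^2(\lim^+_a x)$ from step~3, it suffices to show $\lim^+_a(b^2 x) = \lim^+_a(x)$, i.e. that applying $b^2$ to the base point does not change the positive limit set. For this I would use that $b^2$ commutes with $a^2$ and that $a^2$ (hence $b^2$, acting on the dynamics) moves $x$ to $a^2 x$, which has the same limit set: $\lim^+_{a^2}(a^2 x) = \lim^+_{a^2}(x)$ since $\lim^+$ of $D$ and of $a(D)$ agree and these are $a$-invariant by the remark after the definition. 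Then the point is to connect $x$ and $b^2 x$ through points with the same limit set — but that is not generally possible. The cleaner fix, which I would actually pursue: show directly that $b^2$ maps the connected set $V^+_a(x)$ into a component of $\bbR^2\setminus\lim^+_a(b^2x) = \bbR^2 \setminus b^2(\lim^+_a x)$, namely the one containing $b^2 x$, so $b^2(V^+_a(x)) = V^+_{a}(b^2 x)$; and simultaneously, since $V^+_a(x)$ is $a$-invariant (stated in the excerpt), it is $a^2$-invariant, and $b^2$ commutes with $a^2$, so $b^2(V^+_a(x))$ is $a^2$-invariant. Then I would use uniqueness: $V^+_a(x)$ is \emph{the} $a$-invariant component of the complement of a limit set containing $x$; the hard analytic input is Lemme~5.8 of~\cite{LeR05} guaranteeing invariance. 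I expect that the actual argument in the paper identifies $\lim_a^+(x)$ with $\lim_a^+(x')$ for any $x'$ in the same $\langle a \rangle$-orbit and more generally for $x'$ that are ``connected to $x$ through free disks not separated by the limit set,'' and then notes $b^2 x$ is such a point because $b^2(\lim^+_a x) = \lim^+_{a^2}(b^2 x)$ is connected, disjoint from both $x$ and $b^2x$, forcing the two limit sets (being equal as $\{b^2(\lim^+_a x)\} = \lim^+_a(b^2x)$, then using that it is also disjoint from $\lim^-$) to coincide. I would write step~4 by this route: equivariance gives one inclusion up to the $b^2$-action, and connectedness plus disjointness from $\lim^-_a$ and the invariance of $V^+_a$ gives the equality; the symmetric argument handles $\lim^-_a(x)$, and the statements for $V^\pm_a(x)$ then follow immediately since $b^2$ maps $\bbR^2\setminus\lim^+_a(x)$ to itself and sends the component containing $x$ to the component containing $b^2x$, which is the same component because $b^2$-invariance of $\lim^+_a(x)$ has already been established and one checks $x$ and $b^2 x$ are not separated (e.g. because $V^+_a(x)$ is $a$-invariant, hence contains the whole $a$-orbit closure structure, and $b^2$ commutes with $a^2$). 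The main obstacle, to repeat, is precisely promoting the $b^2$-equivariance $\lim^+_a(b^2x) = b^2\lim^+_a(x)$ to genuine $b^2$-invariance $\lim^+_a(x) = b^2\lim^+_a(x)$, and I expect it is handled by a connectedness/separation argument of the kind just sketched.
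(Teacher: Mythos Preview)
Your steps (1)--(3) are fine, and you correctly identify step (4) --- promoting the equivariance $b^2(\lim_a^+ x)=\lim_a^+(b^2 x)$ to genuine invariance $b^2(\lim_a^+ x)=\lim_a^+(x)$ --- as the crux. But the ``connectedness/separation'' argument you sketch does not close this gap. In general, for a Brouwer homeomorphism the positive limit set $\lim_a^+(y)$ genuinely depends on $y$; there is no reason why $y\in V_a^+(x)$, or $y$ being joined to $x$ by a path avoiding $\lim_a^+(x)$, should force $\lim_a^+(y)=\lim_a^+(x)$. So the reduction ``it suffices to show $x$ and $b^2 x$ are not separated by $\lim_a^+(x)$'' is not justified, and the various attempts you make at this point remain circular or unfounded.

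The paper's argument avoids this difficulty by a small but decisive sharpening of your step (1): it is not only $b^2$ but $b$ itself that commutes with $a^2$ (from $aba^{-1}=b^{-1}$ one gets $a^2 b a^{-2}=a b^{-1}a^{-1}=b$). Hence property~5 gives $b(\lim_a^+ y)=\lim_a^+(by)$ for \emph{every} $y$. Now write the second factor of $b^2$ using the relation $b=ab^{-1}a^{-1}$:
\[
b^2(\lim{}_a^+ x)=b\bigl(\lim{}_a^+(bx)\bigr)=ab^{-1}a^{-1}\bigl(\lim{}_a^+(bx)\bigr)=ab^{-1}\bigl(\lim{}_a^+(bx)\bigr)=a\bigl(\lim{}_a^+ x\bigr)=\lim{}_a^+ x,
\]
using $a$-invariance of each $\lim_a^+(y)$ twice and $b^{-1}$-equivariance once. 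No topological separation argument is needed; the invariance of $V_a^\pm(x)$ then follows immediately. The moral: you worked only with $b^2$ because that is what the statement mentions, but the relation already makes $b$ commute with $a^2$, and exploiting $b$-equivariance together with $b=ab^{-1}a^{-1}$ is exactly what collapses the two applications of $b$.
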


\begin{proof}
We prove the invariance for $\mathrm{lim}^+_{a}(x)$, the proof is the same for the two other sets.
We use the properties of the limit sets enumerated in section~\ref{subsec.singular}. In particular the limit sets with respect to $a$ and $a^2$ are the same, and the limit sets are conjugacy invariants. Since $b$ commutes with $a^2$, we deduce that $b(\mathrm{lim}^+_{a}(y)) = \mathrm{lim}^+_{a}(b(y))$ for every $y$. Then
$$
\begin{array}{rcl}
 b^2  \left(\mathrm{lim}^+_{a}(x) \right) &  = & b \left(\mathrm{lim}^+_{a}(b(x)) \right)  \\
  &  =  & a b^{-1} a^{-1}  \left(\mathrm{lim}^+_{a}(b(x)) \right)   \\
  &   =  & \mathrm{lim}^+_{a}(x)
\end{array}
$$
where the last equality also uses that $\mathrm{lim}^+_{a}(y)$ is invariant under $a$ for every $y$.
\end{proof}

The Brouwer homeomorphism $b^2$ still satisfies $a(b^2)a^{-1} = (b^2)^{-1}$.
Thus, up to replacing $b$ by $b^2$, we may assume the following property:
\emph{for every point $x$, the four sets appearing in Lemma~\ref{lemm.invariance} are invariant under $b$.}

From now on we argue by contradiction, assuming that $a$ is not conjugate to a translation. According to point 2 in section~\ref{subsec.singular}, there exists two points $x,y$ such that $x \in \lim_{a}^-y$.
In particular, let us consider the set $U_1 = V_{a}^-(y)$.
 This is a simply connected proper open subset of the plane, which is invariant under $a$ and $b$. Let $\hat U_1$ denotes the prime-ends compactification of $U_1$. We denote by $a_{1} = \hat a, b_{1} = \hat b$ the induced homeomorphisms, which still satisfy the relation $a_{1}b_{1}a_{1}^{-1} = b_{1}^{-1}$. The point $y \in U_{1}$ identifies with a point in $\hat U_{1}$ which we denote by $y_{1}$.

\begin{lemm}
The homeomorphisms $a_{1}$ and $b_{1}$ have a common fixed point on $\partial \hat U_1$.
\end{lemm}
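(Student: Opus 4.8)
The plan is to mimic the one–dimensional argument of Lemma~\ref{lem.dimension-one} (actions on the circle), using the fact that the prime ends compactification turns $\partial\hat U_1$ into a topological circle on which $a_1$ and $b_1$ act by orientation preserving homeomorphisms satisfying $a_1 b_1 a_1^{-1}=b_1^{-1}$. First I would show that $a_1$ has a fixed point on $\partial\hat U_1$. The set $\lim_a^-y$ is non-empty and $(\lim_a^-y)\cup\{\infty\}$ is a compact connected subset of the sphere invariant under $a$ (property~4 of section~\ref{subsec.singular}); by Lemma~\ref{lem.limit-prime-ends} applied to $a^{-1}$, the set $\lim_{a_1^{-1}}^-y_1$ equals $\partial\hat U_1\setminus\mathrm{Fix}(\hat a^{-1})=\partial\hat U_1\setminus\mathrm{Fix}(a_1)$, so $a_1$ cannot act freely on the circle $\partial\hat U_1$ — an orientation preserving circle homeomorphism with a recurrent point of the above type has rotation number zero, hence a fixed point. (Equivalently: if $a_1$ had no fixed point on $\partial\hat U_1$ it would be conjugate to a rotation, contradicting that $y_1$ has a non-empty $\omega$-limit set under $a_1^{-1}$ accumulating onto a proper closed subset of the circle.)

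Next, starting from a point $w\in\partial\hat U_1$ fixed by $a_1$, I would produce a point fixed by $b_1$ as well, exactly as in the proof of Lemma~\ref{lem.dimension-one}: $b_1$ maps $\mathrm{Fix}(a_1)$ to itself? — no, here one uses instead the relation directly. Pick a point $w_0\in\partial\hat U_1$ fixed by $b_1$ (such a point exists because $b_1$ is an orientation preserving circle homeomorphism; if $b_1$ had no fixed point on $\partial\hat U_1$ then, arguing as in the last paragraph of the proof of Lemma~\ref{lem.dimension-one}, $b_1$ and $a_1 b_1^{-1} a_1^{-1}$ would push points of the circle in opposite directions, contradicting $a_1 b_1 a_1^{-1}=b_1^{-1}$ on any $a_1$-invariant arc; and $\partial\hat U_1$ decomposes into arcs permuted by $a_1$ with at least one invariant, since $\mathrm{Fix}(a_1)$ is nowhere dense by Lemma~\ref{lemm-fixed accessible prime ends} but non-empty). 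Then every iterate $a_1^n(w_0)$ is also fixed by $b_1$, and since $a_1$ has a fixed point on the circle, the sequence $(a_1^n(w_0))$ — being monotone on each component of $\partial\hat U_1\setminus\mathrm{Fix}(a_1)$ — converges to a point $w_\infty\in\mathrm{Fix}(a_1)$, which by continuity of $b_1$ is also fixed by $b_1$. This $w_\infty$ is the desired common fixed point.

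The one subtlety, and the step I expect to require the most care, is the very first claim that $a_1$ has a fixed point on $\partial\hat U_1$: I must make sure that the relevant $\omega$-limit set of $y_1$ under $a_1^{\pm1}$ is a \emph{proper} closed subset of the circle $\partial\hat U_1$, so that $a_1$ restricted to the circle cannot be minimal and hence (being orientation preserving with a non-dense minimal set) has rotation number zero and a fixed point. This follows from Lemma~\ref{lemm-fixed accessible prime ends}: the accessible prime ends are dense in $\partial\hat U_1$, and an accessible prime end, being the limit of an end-path, cannot lie in $\lim_{a_1}^{\pm}y_1$ if it is fixed — but more to the point, one checks directly that $\lim_{a}^- y$ is a proper subset of the plane (it does not contain $y$, and $V_a^-(y)$ is non-empty), so $\partial\hat U_1\setminus\mathrm{Fix}(a_1)$ is a proper open subset of the circle, forcing $\mathrm{Fix}(a_1)\neq\emptyset$. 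Once this is secured, the rest is the routine circle-dynamics argument transcribed above; the remaining details (monotonicity, convergence, continuity) are left to the reader.
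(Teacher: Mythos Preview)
Your proposal over-complicates what is, in the paper, a two-line argument, and in doing so introduces genuine gaps.

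The paper's proof is simply: $\hat U_1$ is homeomorphic to a closed disk, so by the Brouwer fixed point theorem each of $a_1,b_1$ has a fixed point in $\hat U_1$; since $a,b$ are fixed point free on $U_1$, these fixed points lie on $\partial\hat U_1$; now Lemma~\ref{lem.dimension-one} applies directly and gives a common fixed point. You never invoke the Brouwer fixed point theorem, and this is precisely where your argument breaks down.

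Your argument that $\mathrm{Fix}(a_1)\neq\emptyset$ is circular. Lemma~\ref{lem.limit-prime-ends} gives $\lim_{a_1}^- y_1=\partial\hat U_1\setminus\mathrm{Fix}(a_1)$ regardless of whether $\mathrm{Fix}(a_1)$ is empty; if it \emph{is} empty you simply get $\lim_{a_1}^- y_1=\partial\hat U_1$, and nothing you wrote contradicts this. The observation that $\lim_a^-y$ is a proper subset of the \emph{plane} says nothing about $\mathrm{Fix}(a_1)$ on the prime-ends circle. Likewise, your argument that $b_1$ has a boundary fixed point tries to restrict to an $a_1$-invariant arc and run the line argument from Lemma~\ref{lem.dimension-one}; but that requires $b_1$ to preserve this arc, i.e.\ to preserve $\mathrm{Fix}(a_1)$. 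From $a_1 b_1 a_1^{-1}=b_1^{-1}$ one only gets $b_1(\mathrm{Fix}(a_1))\subset\mathrm{Fix}(a_1^2)$, which is not enough; and on the circle the relation only forces the rotation number of $b_1$ to be $0$ or $\frac12$, so a purely one-dimensional argument cannot by itself exclude the fixed-point-free case. Once you accept the Brouwer fixed point theorem for both maps, your convergence argument ($a_1^n(w_0)\to w_\infty\in\mathrm{Fix}(a_1)\cap\mathrm{Fix}(b_1)$) is correct and is exactly the first step of the paper's proof of Lemma~\ref{lem.dimension-one}.
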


\begin{proof}
By the Brouwer fixed point theorem, both $a_{1}$ and $b_{1}$ have some fixed point on $\hat U_{1}$, and since by hypothesis they are fixed point free on $U_{1}$, the fixed points are on the boundary. Thus Lemma~\ref{lem.dimension-one} applies and provides a common fixed point.
\end{proof}

According to Lemma~\ref{lemm-fixed accessible prime ends}, $\partial \hat U_1$ contains some point which is not fixed by $a_{1}$.
Let $\Delta$ be a connected component of $\partial \hat U_1 \setminus \fix(a_{1})$ in $\partial \hat U_1$.
According to Lemma~\ref{lem.dimension-one},  $\Delta$ is invariant under $b_{1}$, and $b_{1}$ has some fixed point on $\Delta$. We would like this fixed point to be isolated among the fixed points of $b_{1}$, and for this we use the following construction.
Let $I$ be any connected component of $\Delta \setminus \fix(b_{1})$. Since $a_{1}$ preserves the fixed point set of $b_{1}$ and has no fixed point on $\Delta$, we have $a_{1} (I) \cap I = \emptyset$. Let $\sim$ be the equivalence relation whose non trivial classes are the connected components of $\partial \hat U_{1} \setminus \{a_{1}^n (I), n \in \bbZ \}$. Denote by $p: \hat U_1  \to \hat U_1/\sim$ the quotient map. It is easy to see that the quotient space $\hat U_1/\sim$ is  again homeomorphic to a disk. The homeomorphisms $a_{1}, b_{1}$ induce homeomorphisms of $\hat U_1/\sim$ which we denote by $a_{2}, b_{2}$, and the relation $a_{2} b_{2} a_{2}^{-1} = b_{2}^{-1}$ is still satisfied. 
The complement of $\Delta$ in $\partial \hat U_{1}$ is sent onto a single point, which is the only fixed point of $a_{2}$, and on $p(\Delta)$ the action of $<a_{2}, b_{2}>$ is conjugate to the easiest line action, as pictured on figure~\ref{fig.dimension-1}.
Also note that according to Lemma~\ref{lem.limit-prime-ends}, the set  $\lim_{a_{1}}^- y_{1}$ is equal to $\partial \hat U_1 \setminus \mathrm{Fix}(a_{1})$, from thus we deduce easily that $\lim_{a_{2}}^- p(y_{1}) = \partial \hat U_1/\sim \setminus \mathrm{Fix}(a_{2}) = p(\Delta)$.

From now on we forget about the initial action and work in $\hat U_1/\sim$, which we identify with $\bbD^2$; we denote the point $p(y_{1})$ by $y_{2}$, and keep the notation $\Delta$ for $p(\Delta)$.  The salient features are the following.
\begin{itemize}
\item The maps $a_{2},b_{2}$ are orientation preserving homeomorphisms of the disk $\bbD^2$ that satisfy $a_{2}b_{2}a_{2}^{-1} = b_{2}^{-1}$ and have no fixed point on $\inte(\bbD^2)$.
\item On $\partial \bbD^2$, the maps $a_{2}$ and $b_{2}$ have a single common fixed point,  which we denote by $\infty$, and on $\Delta = \partial \bbD^2 \setminus\{\infty\}$ the dynamics of $<a_{2}, b_{2}>$ is as pictured on figure~\ref{fig.dimension-1}.
\item There exists a point $y_{2} \in \inte(\bbD^2)$ such that  $\lim_{a_{2}}^- y_{2}  = \partial \bbD^2 \setminus\{\infty\}$.
\end{itemize}

\begin{figure}[htbp]
\begin{center}
\includegraphics[width=4.5cm]{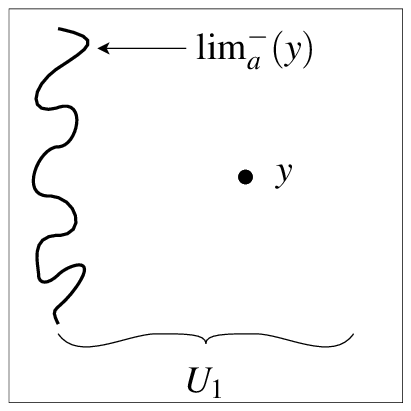}
\hskip1cm
\includegraphics[width=4.5cm]{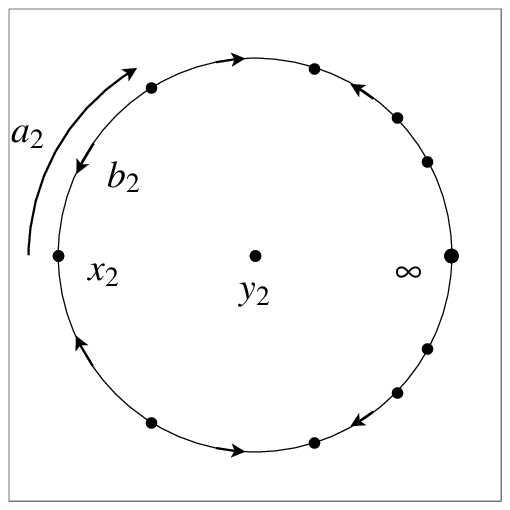}
\caption{Before and after the first prime-ends compactification}
\label{fig.one}
\end{center}
\end{figure}

Let $x_{2}$ denote a fixed point of $b_{2}$ whch is an attractor for the restriction of $b_{2}$ to $\partial \bbD^2$.
Identifying $\bbD^2$ with the unit disk in the plane, we may extend $b_{2}$ to a homeomorphism $b$ of the plane such that $x_{2}$ is an attractor for the restriction of $b$ to $\bbR^2 \setminus \inte(\bbD^2)$. Then we may apply the following lemma with $x=x_{2}$.
\begin{lemm}[Birkhoff's lemma, \cite{Bi20}, paragraph 51]
Let $b$ be a homeomorphism of the plane, and $x$ an isolated fixed point for $b$. Then one of the following holds.
\begin{enumerate}
\item The point $x$ has a basis of connected (closed) neighbourhoods $N$  satisfying $b(N) \subset N$.
\item For any small enough neighbourhood $N$ of $x$, there exists a connected compact set $k'$ included in $N$, containing $x$ and a point of $\partial N$, and satisfying $b^{-1}(k') \subset k'$.
\end{enumerate}
 
\end{lemm}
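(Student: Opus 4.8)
The plan is to attach to each closed topological disk $N$ that is a small enough neighbourhood of $x$ --- so small that $x$ is the only fixed point of $b$ in $N$ --- the largest continuum through $x$ inside $N$ that is stable under $b^{-1}$, and to let the alternative depend on whether that continuum reaches $\partial N$. Concretely, I would consider the family $\mathcal{K}_N$ of all compact connected sets $k$ with $x\in k\subseteq N$ and $b^{-1}(k)\subseteq k$. It contains $\{x\}$, it is stable under finite unions, and the closure of the union of any chain in $\mathcal{K}_N$ again belongs to $\mathcal{K}_N$ (this uses only that $b$ is a homeomorphism of the plane, hence preserves compactness and boundedness, and that $b(x)=x$). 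By Zorn's lemma $\mathcal{K}_N$ has a maximal element, which by the stability under unions is in fact a \emph{maximum}, denoted $k_N$. Two elementary observations are then available: $k_N\subseteq k_{N'}$ whenever $N\subseteq N'$; and if $k_N\subseteq\operatorname{int}(N)$, then $k_N$ is ``full'' and $b(k_N)=k_N$ --- indeed $\operatorname{Full}(k_N)$, the union of $k_N$ with the bounded components of its complement, still lies in $\mathcal{K}_N$ because those components cannot reach $\partial N$, so maximality forces $k_N=\operatorname{Full}(k_N)$; and if $b(k_N)\neq k_N$ then $b(k_N)$ is a continuum containing $k_N\subseteq\operatorname{int}(N)$ but not contained in $N$, hence meeting $\partial N$, and then the connected component of $x$ in $b(k_N)\cap N$ belongs to $\mathcal{K}_N$ and meets $\partial N$ (a boundary-bumping argument, of the kind used via Alexander's lemma elsewhere in this paper), contradicting $k_N\subseteq\operatorname{int}(N)$.

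The dichotomy would then read as follows. If $k_N$ meets $\partial N$ for \emph{every} small closed neighbourhood $N$ of $x$, then $k':=k_N$ satisfies all the requirements of alternative~(2) for each such $N$, and we are done. Otherwise there is a small disk $N$ with $k_N\subseteq\operatorname{int}(N)$; then $k_D=k_N$ for every disk $D$ with $k_N\subseteq\operatorname{int}(D)\subseteq D\subseteq\operatorname{int}(N)$, so no element of such a $\mathcal{K}_D$ reaches $\partial D$. I would then argue that this situation forces $x$ to be \emph{stable for $b$}, in the sense that every neighbourhood of $x$ contains a neighbourhood $U$ with $b(U)\subseteq U$. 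Granting this, alternative~(1) follows: given a small closed disk $B$ around $x$, choose a closed disk $B'\subseteq B$ with $b^n(B')\subseteq B$ for all $n\ge 0$; then $N^{\ast}:=\overline{\bigcup_{n\ge 0}b^n(B')}$ is a \emph{connected} closed neighbourhood of $x$ contained in $B$ with $b(N^{\ast})\subseteq N^{\ast}$, and as $B$ shrinks these sets form a neighbourhood basis.

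The hard part --- which I expect to be essentially the whole difficulty, the Zorn-theoretic part being formal --- is the implication just invoked: that the absence of a $b^{-1}$-stable continuum through $x$ escaping from small disks implies stability of $x$ for $b$; equivalently, in contrapositive form, that if $x$ is not stable for $b$ then $k_N$ meets $\partial N$ for every small $N$. This is the local content of Birkhoff's theorem (\cite{Bi20}, \S51). The strategy I would follow: if $x$ is not stable, pick $z_j\to x$ whose forward orbits leave some fixed small disk, keep for each $j$ the orbit segment up to its first exit, turn these finite orbit segments into genuine continua joining a point near $x$ to a point of $\partial N$ (chaining through a fixed finite cover of $N$, using that $b$ is fixed point free on $N\setminus\{x\}$), and extract a Hausdorff limit, which is automatically a continuum. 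Since $b^{-k}(z_j)\to x$ for each fixed $k$, the first few backward iterates of the approximating segments stay in $\operatorname{int}(N)$ once $j$ is large, and from this one checks that every fixed backward iterate of the limit continuum stays in $N$, so that the limit is an element of $\mathcal{K}_N$ reaching $\partial N$ --- the desired contradiction. The delicate points are the chaining step (producing connected approximants whose backward iterates also stay in $N$, via a diagonal choice) and the bookkeeping of entry and exit times that makes backward invariance pass to the limit.
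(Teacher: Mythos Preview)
The paper does not prove this lemma: it is stated with attribution to Birkhoff (\cite{Bi20}, paragraph~51) and used as a black box. So there is no ``paper's own proof'' to compare against; your proposal is an attempt to supply what the author deliberately outsourced to the literature.

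As for the sketch itself: the Zorn--maximal-continuum framework is sound and the reduction to the contrapositive (``instability of $x$ for $b$ forces a $b^{-1}$-invariant continuum reaching $\partial N$'') is the right way to organise the dichotomy. The genuine gap is precisely where you flag it: the ``chaining'' step. A finite orbit segment $\{z_j,b(z_j),\dots,b^{n_j}(z_j)\}$ is a discrete set, and there is no reason consecutive points should be close, so ``chaining through a fixed finite cover of $N$'' does not, by itself, produce connected approximants whose Hausdorff limit is $b^{-1}$-invariant. Birkhoff's actual construction bypasses this: rather than connecting orbit segments, he works with the set of points in $N$ that are accumulated by forward orbits starting arbitrarily close to $x$ and remaining in $N$ up to that time (a prolongational set), shows directly that this set is closed, $b^{-1}$-invariant, contains $x$, and meets $\partial N$, and then takes the component of $x$. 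Connectivity of that component out to $\partial N$ is the delicate point, and uses planarity (essentially a boundary-bumping/separation argument). Your outline is in the right spirit, but the passage from discrete orbits to a connected invariant limit needs to be replaced by this kind of direct construction to become a proof.
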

If the first case of the lemma occurs, then we define $k = N \cap \bbD^2$ with $N$ a small attracting neighbourhood of $x_{2}$ provided by the Lemma.
 In the opposite case, let us consider some small neighbourhood $N$ and the compact set $k'$ given by the second case of the lemma. Note that, since $x_{2}$ is an attractor for the restriction of $b$ to the complement of $\inte(\bbD^2)$, $k'$ is included in $\inte(\bbD^2) \cup \{x_{2}\}$.
Then the set $k=a_{2} (k')$ satisfies $b_{2}(k) \subset k$.
To summarize, in any case \emph{there exists  a compact connected subset  $k$ of $\bbD^2$, included in an arbitrarily small neighbourhood $N$ of the fixed point $x_{2}$ of $b_{2}$, that contains $x_{2}$ and is not included in $\Delta$, and such that $b_{2}(k) \subset k$.} 
Since $a_{2}$ has no fixed point on $\Delta$, we may find a topological closed disk $D \subset \bbD^2$ which is free for $a_{2}^2$, and such that the set 
$$\bigcup_{n \in \bbZ}a_{2}^n(D)$$
contains $\Delta$. In the above construction we choose $N$ small enough so that $k$ is free for $a_{2}$ and included in $D$. Since the set $\lim_{a_{2}}^+ D$ is disjoint from $a_{2}^n(D)$ for every $n$, this implies that
the set
$$
\lim{}_{a_{2}}^+ k = \lim{}_{a_{2}^2}^+ k \subset \lim{}_{a_{2}^2}^+ D 
$$
is disjoint from $\Delta$.

\begin{lemm*}
 The set $\lim_{a_{2}}^+ k$ contains $y_{2}$.
\end{lemm*}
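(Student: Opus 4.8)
The plan is to deduce the statement from Proposition~\ref{prop.generalise55}, applied not to $a_{2}$ but to $a_{2}^{-1}$ (in the disk setting described at the end of Section~\ref{subsec.singular}), taking $k^{+}=k$ and $k^{-}=D_{0}$ a small closed disk centered at $y_{2}$ that is free for $a_{2}$. If the two hypotheses of that proposition can be checked, it will produce an $n_{0}$ with $a_{2}^{-n}(D_{0})\cap k\neq\emptyset$, that is, $D_{0}\cap a_{2}^{n}(k)\neq\emptyset$, for all $n\geq n_{0}$; letting the diameter of $D_{0}$ tend to $0$ then yields a sequence $(z_{n})$ of points of $k$ with $a_{2}^{n}(z_{n})\to y_{2}$, which by the characterization of limit sets in the disk setting is exactly the assertion $y_{2}\in\lim_{a_{2}}^{+}k$. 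The data I would use is: $k$ is compact, connected and free for $a_{2}$ (hence for $a_{2}^{-1}$); $k$ meets $\Delta$, since by construction it contains $x_{2}$ or $a_{2}(x_{2})$, both lying on $\Delta$; and $k$ is not contained in $\partial\bbD^{2}$, so $k$ contains some point $q$ of $\inte(\bbD^{2})$. Also, by definition $\lim_{a_{2}}^{-}=\lim_{a_{2}^{-1}}^{+}$, so the hypothesis of the lemma reads $\lim_{a_{2}^{-1}}^{+}y_{2}=\Delta$.

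The first hypothesis of Proposition~\ref{prop.generalise55} — that $k$ meets $\lim_{a_{2}^{-1}}^{+}(D_{0})$ — I would get for free: since $y_{2}$ is interior to $D_{0}$, monotonicity gives $\Delta=\lim_{a_{2}^{-1}}^{+}(y_{2})\subset\lim_{a_{2}^{-1}}^{+}(D_{0})$, and $k$ meets $\Delta$. The delicate point, which I expect to be the main obstacle, is the second hypothesis: that $k$ meets the connected component $V_{a_{2}^{-1}}^{+}(D_{0})$ of $\bbD^{2}\setminus\lim_{a_{2}^{-1}}^{+}(D_{0})$ that contains $D_{0}$. The idea is to exploit that $\Delta$ lies in $\partial\bbD^{2}$ while the ``obstruction'' point $q$ sits in the interior. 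Identify $\bbD^{2}$ with the euclidean unit disk and, for $\varepsilon>0$, set $W_{\varepsilon}=\{z\in\bbD^{2}:|z|>1-\varepsilon\}$, a neighbourhood of $\partial\bbD^{2}$ whose complement $\bbD^{2}\setminus W_{\varepsilon}$ is a closed, hence connected, disk. Choose $\varepsilon$ small enough that $\bbD^{2}\setminus W_{\varepsilon}$ contains both $y_{2}$ and $q$. Along a decreasing sequence of disks $D_{0}$ shrinking to $\{y_{2}\}$, the sets $\lim_{a_{2}^{-1}}^{+}(D_{0})$ form a decreasing family of compact subsets of $\bbD^{2}$ (compact because $a_{2}^{-1}$ preserves $\bbD^{2}$, so the positive iterates under $a_{2}^{-1}$ of $D_{0}$ stay in $\bbD^{2}$) whose intersection is $\lim_{a_{2}^{-1}}^{+}(y_{2})=\Delta\subset W_{\varepsilon}$; a finite‑intersection argument then shows that $\lim_{a_{2}^{-1}}^{+}(D_{0})\subset W_{\varepsilon}$ for $D_{0}$ small enough. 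For such a $D_{0}$, the connected set $\bbD^{2}\setminus W_{\varepsilon}$ is disjoint from $\lim_{a_{2}^{-1}}^{+}(D_{0})$ and contains $y_{2}\in D_{0}$, hence is contained in $V_{a_{2}^{-1}}^{+}(D_{0})$; in particular $q\in k\cap V_{a_{2}^{-1}}^{+}(D_{0})$, which is the second hypothesis.

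With both hypotheses in hand, Proposition~\ref{prop.generalise55} (applied to $a_{2}^{-1}$, $k^{-}=D_{0}$, $k^{+}=k$) gives $a_{2}^{-n}(D_{0})\cap k\neq\emptyset$ for all large $n$, and the conclusion follows by shrinking $D_{0}$ as explained in the first paragraph. The genuinely new part is the collar argument of the second paragraph; the remaining points are routine: the monotonicity $\lim^{+}x\subset\lim^{+}D$ for $x\in\inte D$ and the identity $\bigcap_{D_{0}\downarrow\{y_{2}\}}\lim_{a_{2}^{-1}}^{+}(D_{0})=\lim_{a_{2}^{-1}}^{+}(y_{2})$, both immediate from the definition of the point limit set, and the fact that Proposition~\ref{prop.generalise55} remains valid in the disk setting (via the transfer to the plane $O$ of Section~\ref{subsec.singular}). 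Note that this argument uses neither the disjointness of $\lim_{a_{2}}^{+}k$ from $\Delta$ nor the $b_{2}$‑invariance of $k$; only that $k$ is a compact connected free set meeting $\Delta$ but not contained in $\partial\bbD^{2}$.
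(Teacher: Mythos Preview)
Your argument is correct and is essentially the paper's own: apply Proposition~\ref{prop.generalise55} with $a=a_{2}^{-1}$, $k^{-}=D_{0}$ a small disk around $y_{2}$ and $k^{+}=k$, the nontrivial step being to show that $k$ meets $V^{-}_{a_{2}}(D_{0})$. The paper does this with an arc $\gamma\subset\inte(\bbD^2)$ from $y_{2}$ to a point of $k$ rather than your subdisk $\bbD^2\setminus W_{\varepsilon}$; both are compact connected subsets of $\inte(\bbD^2)$ joining $y_{2}$ to $k$, hence disjoint from $\Delta=\lim_{a_{2}}^{-}y_{2}$ and therefore, for $D_{0}$ small, from $\lim_{a_{2}}^{-}D_{0}$ (your compactness justification for the finite-intersection step is a little loose---the sets $\lim_{a_{2}}^{-}D_{0}\supset\Delta$ accumulate at $\infty$---but since $\bbD^2\setminus W_{\varepsilon}$ is itself compact and avoids $\infty$, the argument goes through unchanged).
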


\begin{proof}
Let $D$ be a closed disk containing $y_{2}$ in its interior. The set $\lim_{a_{2}}^- D$ contains $\lim_{a_{2}}^- y_{2} = \partial \bbD^2 \setminus\{\infty\}$, in particular it contains $x_{2}$.
Let $\gamma$ be any arc joining $y_{2}$ to a point of $k$ inside $\inte(\bbD^2)$. The arc $\gamma$ is disjoint from $\lim_{a_{2}}^- y_{2}$, thus it is also disjoint from $\lim_{a_{2}}^- D$ if $D$ is small enough. In this case $\gamma$ is included in the complementary component $V^-_{a_{2}} (D)$ of $\lim_{a_{2}}^- D$ that contains $D$, and in particular this set contains a point of $k$. Since $k$ meets both $\lim_{a_{2}}^- D$ and $V^-_{a_{2}} (D)$ we may apply Proposition~\ref{prop.generalise55} (with $k^-=D$, $k^+=k$, and $a=a_{2}^{-1}$).
We get that there exists $n_{0}$ such that for every $n \geq n_{0}$, the set $a_{2}^{-n}(D)$ meets $k$, in other words $a_{2}^{n}(k)$ meets $D$. Thus $\lim_{a_{2}}^+ k$ meets $D$. Since this happens for every small enough $D$, we get that $y_{2}$ belongs to $\lim_{a_{2}}^+ k$.
\end{proof}

\bigskip
\bigskip

 Let $U_{2}$ be the connected component of $\inte(\bbD^2) \setminus \lim^+_{a_{2}} k$ containing $\inte(\bbD^2) \cap k$. This is a simply-connected proper open subset of the disk. We consider  the prime-ends compactification  $\hat U_{2}$ of $U_{2}$, and we let $a_{3} = \hat a_{2}, b_{3} = \hat b_{2}$ be the induced homeomorphisms. 
Since the set $\lim{}_{a_{2}}^+ k $ is disjoint from $\Delta$,
the points of $\Delta$ are all accessible from $U_{2}$, so $\Delta$ identifies with an open interval  of $\partial \hat U_{2}$. In particular the point $x_{2}$ and the set $k$ identify with a point and a set in $\hat U_{2}$ which we still denote by $x_{2}$ and $k$.
\begin{lemm}
The set $\lim^+_{a_{3}} k$   is equal to 
$$\partial \hat U_{2} \setminus  (\Delta \cup \mathrm{Fix}(a_{3}).$$ 
\end{lemm}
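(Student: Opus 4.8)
The statement is the disk analogue of (the compact-set version of) Lemma~\ref{lem.limit-prime-ends}, with two twists: we work in the ``homeomorphisms of the disk'' framework of Section~\ref{subsec.singular} rather than in the plane, and the boundary $\partial\hat U_{2}$ now carries the extra arc $\Delta$, inherited from $\partial\bbD^2$, which must be excluded from the limit set. So the plan is to run the proof of Lemma~\ref{lem.limit-prime-ends} and to check these two new points along the way. I prove the two inclusions separately.

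\emph{Inclusion $\lim^+_{a_{3}}k\subseteq\partial\hat U_{2}\setminus(\Delta\cup\mathrm{Fix}(a_{3}))$.} Disjointness of $\lim^+_{a_{3}}k$ from $\mathrm{Fix}(a_{3})$ is one of the general properties of limit sets recalled, in the disk version, in Section~\ref{subsec.singular}. For disjointness from $U_{2}$ and from $\Delta$, recall that the prime-ends compactification comes with a continuous projection $\pi:\hat U_{2}\to\overline{U_{2}}$ which is the identity on $U_{2}$ and which, the points of $\Delta$ being accessible and $\Delta$ being embedded as an open arc of $\partial\hat U_{2}$, restricts to a homeomorphism from $\Delta\subseteq\partial\hat U_{2}$ onto $\Delta\subseteq\partial\bbD^2$. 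If some $w\in\lim^+_{a_{3}}k$ lay in $U_{2}\cup\Delta$, then choosing $z_{n}\in k$ with $a_{3}^{n}(z_{n})\to w$ and applying $\pi$ would give $a_{2}^{n}(z_{n})\to w$ in $\bbD^2$; since $a_{2}$ has no fixed point in $\inte(\bbD^2)$ nor on $\Delta$, this means $w\in\lim^+_{a_{2}}k$. But $\lim^+_{a_{2}}k$ is disjoint from $U_{2}$ by the very definition of $U_{2}$, and it was established above to be disjoint from $\Delta$. This contradiction gives the inclusion; in particular $\lim^+_{a_{3}}k$ is closed in $\partial\hat U_{2}\setminus(\Delta\cup\mathrm{Fix}(a_{3}))$.

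\emph{Inclusion $\partial\hat U_{2}\setminus(\Delta\cup\mathrm{Fix}(a_{3}))\subseteq\lim^+_{a_{3}}k$.} Applying Lemma~\ref{lemm-fixed accessible prime ends} with $U=U_{2}$ and with the Brouwer homeomorphism extending $a_{2}$ (cf. Section~\ref{subsec.singular}), no accessible prime end of $U_{2}$ is fixed by $a_{3}$. Together with the density of accessible prime ends (\cite{Mat82}, Theorem~17.2), the openness of $\Delta$ in $\partial\hat U_{2}$, and the closedness just obtained, it suffices to show that every accessible prime end $e\in\partial\hat U_{2}\setminus\Delta$ lies in $\lim^+_{a_{3}}k$. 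Fix such an $e$ and take an end-path $\gamma:(0,1]\to U_{2}$ with $\gamma(1)\in\inte(\bbD^2)\cap k$ (a non-empty subset of $U_{2}$, $k$ not being contained in $\Delta$) and $\lim_{\hat U_{2}}\gamma=e$, chosen so that its closure $\overline{\gamma}$ in $\bbD^2$ is free for $a_{2}$. Since $e\notin\Delta$, the limit point of $\gamma$ in $\bbD^2$ is a point of $\partial U_{2}\cap\inte(\bbD^2)\subseteq\lim^+_{a_{2}}k$; hence $\overline{\gamma}$, a compact connected free set, meets $\lim^+_{a_{2}}k$ as well as the component $V^+_{a_{2}}(k)$ of its complement that contains $k$ (through $\gamma(1)$). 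Proposition~\ref{prop.generalise55}, applied with $k^{-}=k$ and $k^{+}=\overline{\gamma}$, yields $n_{0}$ with $a_{2}^{n}(k)\cap\overline{\gamma}\neq\emptyset$ for all $n\geq n_{0}$. Since $\lim^+_{a_{2}}k$ is disjoint from every iterate $a_{2}^{n}(k)$, any $p_{n}\in a_{2}^{n}(k)\cap\overline{\gamma}$ lies on $\gamma((0,1])\subseteq U_{2}$; then $a_{2}^{-n}(p_{n})\in k$, and after extracting a subsequence we may assume $a_{2}^{-n}(p_{n})$ converges inside the compact set $k$ and $p_{n}\to q\in\overline{\gamma((0,1])}^{\hat U_{2}}=\gamma((0,1])\cup\{e\}$. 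If $q=\gamma(t_{*})$ with $t_{*}>0$, then $q\in\inte(\bbD^2)$ is not fixed by $a_{2}$ and the relation $a_{2}^{n}(a_{2}^{-n}p_{n})\to q$ forces $q\in\lim^+_{a_{2}}k$, contradicting $q\in U_{2}$; hence $q=e$, and the same limit relation, together with $e\notin\mathrm{Fix}(a_{3})$, gives $e\in\lim^+_{a_{3}}k$.

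The delicate part, as always with prime ends, is the bookkeeping between the two families of boundary prime ends of $U_{2}$ --- the ``interior'' ones, arising from $\lim^+_{a_{2}}k\subseteq\inte(\bbD^2)$, and the arc $\Delta$, arising from $\partial\bbD^2$ --- together with the details compressed into the last paragraph: that convergence to a point of $\Delta$ inside $\hat U_{2}$ really detects convergence inside $\bbD^2$, that the end-path $\gamma$ can be chosen so that $\overline{\gamma}$ is free for $a_{2}$ while still reaching $\lim^+_{a_{2}}k$, and that the common fixed point $\infty\in\partial\bbD^2$ contributes no problematic accessible prime end. All of this is routine once one uses that $\Delta$ is embedded as an open arc of accessible prime ends and that $\lim^+_{a_{2}}k$ is disjoint from $\Delta$, but it is where the argument needs to be handled with care.
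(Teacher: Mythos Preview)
Your proof is correct and follows precisely the approach the paper intends: the paper's own proof is a one-line reference to Lemma~\ref{lem.limit-prime-ends} and Proposition~\ref{prop.generalise55} with ``details left to the reader'', and you have supplied those details, including the extra bookkeeping for the arc $\Delta$. One small imprecision: there is no continuous projection $\pi:\hat U_{2}\to\overline{U_{2}}$ in general (prime ends can have nontrivial impressions), but you only use the restriction of such a map to $U_{2}\cup\Delta$, where it does exist and is a homeomorphism onto its image because $\partial U_{2}$ coincides with the smooth arc $\Delta$ near $\Delta$ --- and you flag exactly this point in your final paragraph.
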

\begin{proof}
As in the proof of Lemma~\ref{lem.limit-prime-ends}, we use Proposition~\ref{prop.generalise55} to show that every accessible prime end on 
$\partial \hat U_{2} \setminus  \clos(\Delta)$ is accumulated by iterates of $k$. Details are left to the reader.
\end{proof}
Since the set $\lim{}_{a_{2}}^+ k $ is not empty, $\hat U_{2} \setminus  \Delta$ is a non-empty closed interval. This interval contains some points that are not fixed under $b_{3}$ (Lemma~\ref{lemm-fixed accessible prime ends}), thus we may consider some connected component $I'$ of 
$$\lim^+_{a_{3}} k \setminus \left(\Delta \cup \fix(b_{3})\right).$$
As above,  we identify points in the same component of 
$$
\partial \hat U_{2} \setminus \left( \bigcup_{n \in \bbZ} a_{3}^n(I') \cup \Delta\right).
$$
The resulting space is again homeomorphic to the disk, and we get the following situation.
\begin{figure}[htbp]
\begin{center}
\includegraphics[width=5.5cm]{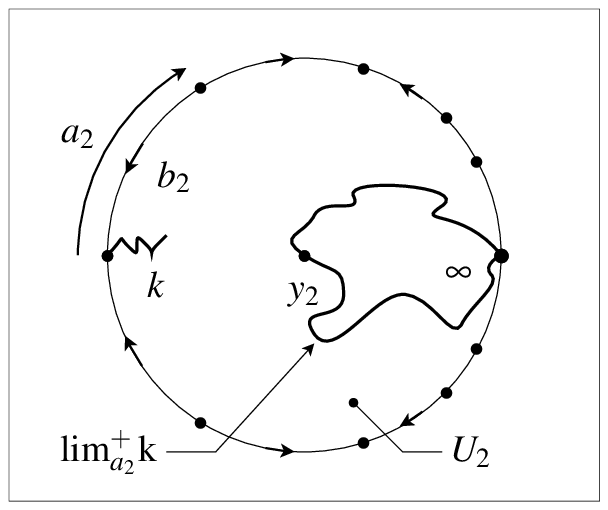}
\hskip1cm
\includegraphics[width=5.5cm]{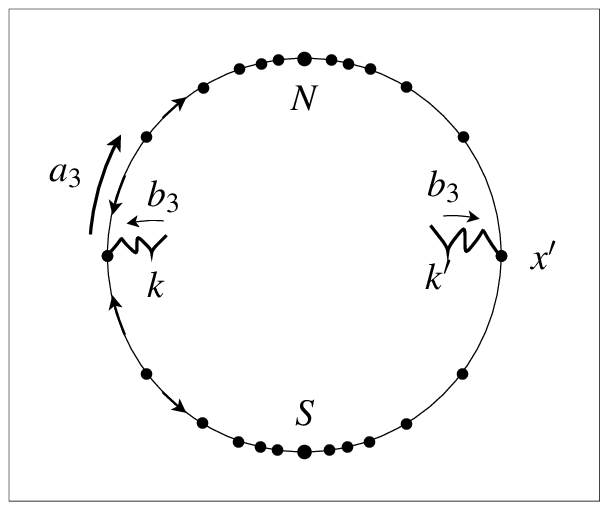}
\caption{Before and after the second prime-ends compactification}
\label{fig.before-after-two}
\end{center}
\end{figure}
 We have two homeomorphisms $a_{3},b_{3}$ of $\bbD^2$ that have no fixed point on $\inte(\bbD^2)$ and satisfy $a_{3}b_{3}a_{3}^{-1} = b_{3}^{-1}$.
 The map $a_{3}$ has exactly two fixed points which we denote by $N,S$. Each connected component $\Delta,\Delta'$ of $\partial \bbD^2 \setminus\{N,S\}$, 
 is preserved by $a_{3}, b_{3}$, and the action on $\Delta,\Delta'$ are conjugate to the action on figure~\ref{fig.dimension-1}.
There exists a compact connected set $k$, free for $a_{3}$, meeting $\Delta$ and such that $\lim_{a_{3}}^+ k = \Delta'$.
This set satisfies $b_{3}(k) \subset k$.

We apply Bikhoff's Lemma again to get
 a compact connected set $k'$  in $\bbD^2$, included in an arbitrarily small neighbourhood of   some fixed point $x' \in \Delta'$ for $b_{3}$, containing $x'$ but not included in $\Delta'$, and such that $b_{3}(k') \subset k'$. 

\begin{lemm*}
There exists $n_{0}$ such that for every $n \geq n_{0}$, $a_{3}^n(k)$ meets $k'$.
\end{lemm*}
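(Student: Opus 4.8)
The plan is to obtain this as a direct application of Proposition~\ref{prop.generalise55}, with $a = a_{3}$, $k^- = k$ and $k^+ = k'$; its conclusion is exactly the statement to be proved. Two preliminary matters must be settled. First, I would arrange that the Birkhoff set $k'$ is \emph{free} for $a_{3}$: it lies in an arbitrarily small neighbourhood of $x'$, and $x'$ --- being a fixed point of $b_{3}$ lying in the \emph{open} arc $\Delta'$ --- is distinct from the two endpoints $N$ and $S$ of $\Delta'$, which are the only fixed points of $a_{3}$; hence a small enough neighbourhood of $x'$ is free for $a_{3}$, and we take $k'$ inside such a neighbourhood. (Proposition~\ref{prop.generalise55} is stated for a Brouwer homeomorphism of the plane, so strictly speaking one applies it to the Brouwer homeomorphism associated to $a_{3}$ by the extension procedure of Section~\ref{subsec.singular}; since $k$ and $k'$ avoid $\{N,S\}$, they live in the corresponding plane and nothing is lost.)

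Then I would check the two hypotheses of the proposition. That $k' = k^+$ meets $\lim_{a_{3}}^+ k = \Delta'$ is immediate, since $x' \in k' \cap \Delta'$. For the remaining hypothesis, that $k'$ meets $V_{a_{3}}^+(k)$, the key observation is that $k$ is connected and disjoint both from $\lim_{a_{3}}^+ k = \Delta'$ (a limit set being disjoint from the set it is attached to) and from $\{N,S\}$ (being free for $a_{3}$), hence $k \subset \inte(\bbD^2) \cup \Delta$; this last set is connected and disjoint from $\Delta'$, so it lies inside the connected component of the complement of $\Delta'$ that contains $k$, namely $V_{a_{3}}^+(k)$. In particular $\inte(\bbD^2) \subset V_{a_{3}}^+(k)$. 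On the other hand $k'$ is \emph{not} contained in $\Delta'$, while it lies in an arbitrarily small neighbourhood of the point $x'$ of the open arc $\Delta'$; consequently $k'$ contains a point of $\inte(\bbD^2)$, and this point lies in $V_{a_{3}}^+(k)$. Thus both hypotheses hold, and Proposition~\ref{prop.generalise55} produces an $n_{0} > 0$ with $a_{3}^n(k) \cap k' \neq \emptyset$ for all $n \geq n_{0}$.

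The one step I expect to require some care is the middle one: checking that $V_{a_{3}}^+(k)$ really contains the whole interior of the disk. This rests on the fact --- established just before the statement --- that $\lim_{a_{3}}^+ k$ equals \emph{exactly} $\Delta'$, together with the elementary remark that deleting a boundary arc from a closed disk leaves a connected set. Everything else is routine, so I do not anticipate a genuine obstacle.
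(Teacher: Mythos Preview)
Your proposal is correct and follows exactly the paper's approach: the paper's proof is the single sentence ``This lemma is an immediate consequence of Proposition~\ref{prop.generalise55}, with $k^-=k$ and $k^+=k'$.'' Your verification of the hypotheses (freeness of $k'$, $k'\cap\Delta'\neq\emptyset$, and $\inte(\bbD^2)\subset V_{a_3}^+(k)$) is accurate and simply spells out what the paper leaves implicit.
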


\begin{proof}
This lemma is an immediate consequence of Proposition~\ref{prop.generalise55}, with $k^-=k$ and $k^+=k'$.
\end{proof}

Choose some even number $2n$ such that $a_{3}^{2n} k$ meets $k'$. Since $a_{3}^{2n}$ commutes with $b_{3}$, the set $k' = a_{3}^{2n} k$ again satisfies $b_{3}(k') \subset k'$.
By Brouwer's theory, for any point $z \in \inte(\bbD^2)$, the $\omega$-limit set $\omega(z)$ of $z$ for $b_{3}$ is included in $\partial \bbD^2$.
Since $b_{3}(k') \subset k'$,  if $z$ belongs to $k'$ then $\omega(z) \subset k' \cap \partial \bbD^2 \subset \Delta'$. Similarly if $z$ belongs to $a_{3}^{2n} k$ then 
$\omega(z) \subset k \cap \partial \bbD^2 \subset \Delta$. This contradicts the facts that 
$$
a_{3}^{2n} k \cap k' \neq \emptyset \mbox{ but } \Delta \cap \Delta' = \emptyset.
$$

\begin{figure}[htbp]
\begin{center}
\includegraphics{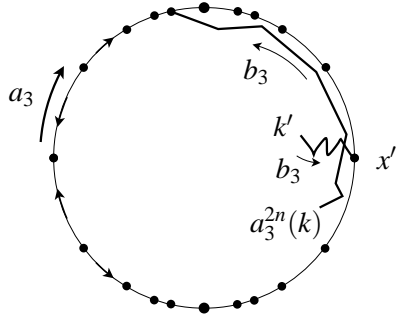}
\caption{The contradiction}
\label{fig.contradiction}
\end{center}
\end{figure}



\begin{thebibliography}{150}

\bibitem{BauSol62}
Baumslag, Gilbert; Solitar,  Donald.
Some two-generator one-relator non-Hopfian groups, \emph{Bull. Amer. Math. Soc.} \textbf{68} (1962), 199--201. 

\bibitem{Bi20} Birkhoff, George D. 
Surface transformations and their dynamical applications. \emph{Acta Math.} \textbf{43} (1920), no. 1, 1--119.

\bibitem{ConKol94}
Constantin, Adrian; Kolev, Boris.
The theorem of Kerékjártó on periodic homeomorphisms of the disc and the sphere.
\emph{Enseign. Math.} (2) \textbf{40} (1994), no. 3-4, 193--204.

\bibitem{DerNav10}
Deroin, Bertrand; Navas, Andres.
\emph{Probabilistic and dynamical aspects of (left) orderable groups}, in preparation.

\bibitem{Gui94}
Guillou, Lucien.
 Théorème de translation plane de Brouwer et gé\-né\-ra\-li\-sa\-tions du théorème de Poincaré-Birkhoff.
 \emph{Topology} \textbf{33} (1994), no. 2, 331--351.

\bibitem{HomTer53}
Homma, Tatsuo;  Terasaka, Hidetaka.
On the structure of the plane translation of Brouwer
\emph{Osaka Math. J.} \textbf{5} (1953), no. 2,  233-266. 

\bibitem{Ker19}
de Kerékjártó, B\'ela.
Über die periodischen Transformationen der Kreisscheibe und der Kugelfläche. (German)
\emph{Math. Ann.} \textbf{80} (1919-20) 36--38.

\bibitem{Ker41}
de Kerékjártó, Béla. Sur les groupes compacts de transformations topologiques des surfaces. \emph{Acta Math.} \textbf{74} (1941), 129--173. 

\bibitem{Kne26} Kneser, Hellmuth
Die deformationssatze der einfach zusammenhangenden flachen, \emph{Math. Z.} \textbf{25} (1926) 363--372. 

\bibitem{Kol06}
Kolev, Boris.
Sous-groupes compacts d'homéomorphismes de la sphère.
\emph{Enseign. Math.} (2) \textbf{52} (2006), no. 3-4, 193--214. 

\bibitem{LeC06}
Le Calvez, Patrice.
From Brouwer theory to the study of homeomorphisms of surfaces. 
\emph{International Congress of Mathematicians. Vol. III}, 77--98, Eur. Math. Soc., Zürich, 2006.

\bibitem{LeR99}
Le Roux, Frédéric.
 Bounded recurrent sets for planar homeomorphisms.  
 \emph{Ergodic Theory Dynam. Systems}  \textbf{19}  (1999),  no. 4, 1085--1091.

\bibitem{LeR04}
Le Roux, Frédéric.
Homéomorphismes de surfaces: théorèmes de la fleur de Leau-Fatou et de la variété stable.
\emph{Astérisque} No. 292 (2004).

\bibitem{LeR05}
Le Roux, Frédéric.
Structure des homéomorphismes de Brouwer.  \emph{Geom. Topol.}  \textbf{9}  (2005), 1689--1774.


\bibitem{LeR10}
Le Roux, Frédéric.
 Un indice qui affine l'indice de Poincaré-Lefschetz pour les homeomorphismes de surfaces.  \emph{Ann. of Math.} (2) \textbf{171} (2010), no. 3, 1531--1589.
 
\bibitem{Mat82}
 Mather, John N.. 
 Topological proofs of some purely topological consequences of Carathéodory's theory of prime ends.  
 \emph{Selected studies: physics-astrophysics, mathematics, history of science},   225--255, North-Holland, Amsterdam-New York, 1982.
  
\bibitem{Thu97}
Thurston, William.
\emph{Three-dimensional geometry and topology.} Vol. 1.
Edited by Silvio Levy. Princeton Mathematical Series, 35. Princeton University Press, Princeton, NJ, 1997.


\end{thebibliography}
\end{document}